\def\x{\bar{x}}
\def\y{\bar{y}}
\def\v{\bar{v}}
\def\d{\bar{d}}
\def\t{\bar{t}}
\def\argmin{\textrm{argmin}}
\def\etal{et al. }
\newtheorem{theorem}{Theorem}[section]
\newtheorem{remark}{Remark}[section]
\newtheorem{lemma}{Lemma}[section]
\newtheorem{assumption}{Assumption}[section]
\title{A Single-Loop Algorithm for Decentralized Bilevel Optimization}
\date{}
\begin{document}
\author{ Youran Dong\footnotemark[1],\quad Shiqian Ma\footnotemark[2],\quad Junfeng Yang\footnotemark[1],\quad Chao Yin\footnotemark[1]}

\renewcommand{\thefootnote}{\fnsymbol{footnote}}

\footnotetext[1]{Department of Mathematics, Nanjing University, Nanjing, P. R. China. Email: yrdong@smail.nju.edu.cn, jfyang@nju.edu.cn, yinchao@smail.nju.edu.cn. Research supported by the National Natural Science Foundation of China (NSFC-12371301).}
\footnotetext[2]{Department of Computational Applied Mathematics and Operations Research, Rice University, Houston, USA. Email: sqma@rice.edu. Research supported in part by NSF grants DMS-2243650, CCF-2308597, CCF-2311275 and ECCS-2326591, and a startup fund from Rice University.}

\maketitle

\begin{abstract}
Bilevel optimization has gained significant attention in recent years due to its broad applications in machine learning. This paper focuses on bilevel optimization in decentralized networks and proposes a novel single-loop algorithm for solving decentralized bilevel optimization with a strongly convex lower-level problem. Our approach is a fully single-loop method that approximates the hypergradient using only two matrix-vector multiplications per iteration. Importantly, our algorithm does not require any gradient heterogeneity assumption, distinguishing it from existing methods for decentralized bilevel optimization and federated bilevel optimization. Our analysis demonstrates that the proposed algorithm achieves the best-known convergence rate for bilevel optimization algorithms. We also present experimental results on hyperparameter optimization problems using both synthetic and MNIST datasets, which demonstrate the efficiency of our proposed algorithm.
\end{abstract}

\section{Introduction}
Bilevel optimization (BO) has received increasing attention in recent studies due to its wide applications in machine learning, including but not limited to hyperparameter optimization \cite{pedregosa2016hyperparameter, franceschi2018bilevel}, meta learning \cite{franceschi2018bilevel, rajeswaran2019meta, ji2020convergence} and adversarial training \cite{bishop2020optimal, wang2021fast, zhang2022revisiting}. A generic BO takes the form
\begin{align}\label{BO}
\mathop{\min}\limits_{x \in \mathbb{R}^p}  \Phi(x) = F(x,y^*(x)),\quad \mathrm{s.t.~} \   y^*(x)=\arg\min_{y\in \mathbb{R}^q} f(x,y).
\end{align}
Throughout this paper, we assume that the lower-level (LL) objective function $f$ is twice continuously differentiable and strongly convex with respect to $y$ for any fixed $x$. Problem \eqref{BO} aims at minimizing the upper-level (UL) function $F$ with respect to $x$ with $y$ being the optimal solution of the LL problem for fixed $x$. Algorithms for solving BO \eqref{BO} have been studied extensively. When the LL problem is strongly convex with respect to $y$ so that it admits unique solution for fixed $x$, a natural idea to solve \eqref{BO} is to apply gradient descent for the UL problem. Under the assumption that $F$ is smooth, the gradient descent method for solving \eqref{BO} updates the iterate as follows:
\begin{equation*}
x^{k+1} := x^k - \tau_{x,k} \nabla\Phi(x^k),
\end{equation*}
where $\tau_{x,k}>0$ is a step size, and the hypergradient $\nabla\Phi(x)$ is given by
\begin{equation}\label{BO-hypergradient}
\nabla\Phi(x) := \nabla_1 F(x,y^*(x)) - \nabla_{12}^2f(x,y^*(x))[\nabla_{22}^2f(x,y^*(x))]^{-1}\nabla_2F(x,y^*(x)).
\end{equation}
Two challenges arise from computing the hypergradient in \eqref{BO-hypergradient}: (i) how to efficiently (approximately) compute $y^*(x^k)$, which requires solving the LL problem for given $x^k$; and (ii) how to deal with the matrix inversion, or equivalently, solve the linear system in \eqref{BO-hypergradient}.
Different approaches addressing these two issues lead to different algorithms for solving \eqref{BO}. Let $K$ and $T$ be given positive integers. A basic algorithm along this line for solving \eqref{BO} updates the iterates as follows:
\begin{align}
& \mbox{for } k = 0,1,\ldots, K-1 \nonumber\\
& \qquad y^{k,0} = y^{k-1,T} \nonumber\\
& \qquad \mbox{for } t = 0,1,\ldots, T-1 \nonumber\\
& \qquad\qquad y^{k,t+1} = y^{k,t} -\tau_{y,k} \nabla_2 f(x^k,y^{k,t}) \label{BO-double-loop}\\
& \qquad \mbox{end for}\nonumber  \\
& \qquad x^{k+1} = x^k -  \tau_{x,k} \widetilde{\nabla \Phi}(x^k)\nonumber \\
& \mbox{end for} \nonumber
\end{align}
where 
$\widetilde{\nabla \Phi}(x^k)$ is an approximation of the hypergradient $\nabla \Phi(x^k)$ and is defined as
\begin{align}\label{BO-hypergradient-linear-system}
\widetilde{\nabla \Phi}(x^k) = \nabla_1F(x^k, y^{k,T}) - \nabla_{12}^2f(x^k, y^{k,T})[\nabla_{22}^2f(x^k,y^{k,T})]^{-1}\nabla_2F(x^k, y^{k,T}).
\end{align}
In practice, exactly calculating the Hessian inverse or solving the linear system of equations in \eqref{BO-hypergradient-linear-system} is computationally inefficient, and hence two representative approaches to estimate \eqref{BO-hypergradient-linear-system} have been proposed in the literature: iterative differentiation (ITD) and approximate implicit differentiation (AID).
Approaches related to ITD, such as those proposed in \cite{franceschi2018bilevel, shaban2019truncated, ji2020convergence, grazzi2020iteration, ji2021bilevel, ji2022will}, leverage automatic differentiation to approximate the hypergradient using backpropagation.
Approaches related to AID, including those proposed in
\cite{pedregosa2016hyperparameter, ghadimi2018approximation, grazzi2020iteration, ji2021bilevel, ji2022will, chen2021closing, hong2023two, dagreou2022framework}, use various methods to 
approximately evaluate $[\nabla_{22}^2f(x^k,y^{k,T})]^{-1}\nabla_2F(x^k, y^{k,T})$ in 
\eqref{BO-hypergradient-linear-system}. Some of these methods employ gradient descent or conjugate gradient methods, while others use Neumann series to approximate the Hessian inverse.
Additionally, it should be noted that the update scheme \eqref{BO-double-loop} involves a double-loop structure, where updating $x$ constitutes the outer loop while updating $y$ represents the inner loop. However, this structure is not preferable in practical settings. Some works \cite{chen2021closing, hong2023two} eliminate this double-loop structure by taking $T=1$ in \eqref{BO-double-loop}, yet still require the use of the AID approach to estimate \eqref{BO-hypergradient-linear-system}.
Furthermore, both AID and ITD approaches need another sub-loop to estimate \eqref{BO-hypergradient-linear-system}, which may involve $\Theta(\log K)$ Hessian- and Jacobian-vector multiplications per iteration (see, e.g., \cite[Theorem 2]{ji2021bilevel}). 

Recently, Dagr{\'e}ou \etal \cite{dagreou2022framework} proposed a fully single-loop framework (named SOBA) for solving \eqref{BO} that only needs one matrix-vector multiplication to approximately solve the linear system  of equations in \eqref{BO-hypergradient-linear-system} in each iteration. The SOBA algorithm maintains three sequences and updates them as
\begin{align}\label{soba}
y^{k+1} = y^k - \beta_k D_y^k, \quad
v^{k+1} = v^k + \eta_k D_v^k, \quad
x^{k+1} = x^k - \alpha_k D_x^k,
\end{align}
where $\alpha_k$, $\beta_k$ and $\eta_k$ are stepsizes, $D_y^k, D_v^k$ and $D_x^k$ are respectively unbiased stochastic estimators of $d_y(x^k,y^k),d_v(x^k,y^k,v^k)$ and $d_x(x^k,y^k,v^k)$ defined as $d_y(x,y) = \nabla_2f(x,y)$,
\[
d_v(x,y,v) = \nabla_2F(x,y) - \nabla^2_{22}f(x,y)v \text{~~and~~}
d_x(x,y,v) = \nabla_1F(x,y) - \nabla^2_{12}f(x,y)v.
\]
The SOBA framework \cite{dagreou2022framework} has been extended by \cite{liu2023averaged} to handle the case where the LL problem is merely convex. In this case, it is  assumed that the sequence $\{v^k\}$ is bounded to ensure convergence.
Other notable BO algorithms include \cite{kwon2023fully,chen2023near} which first convert \eqref{BO} to an equivalent constrained single-level problem, and then approximately solve this reformulated problem. While this approach circumvents the need for computing the approximate hypergradient \eqref{BO-hypergradient-linear-system}, it requires careful consideration when handling the constraints. In \cite{hu2023improved}, the authors suggest transforming BO into an unconstrained constraint dissolving problem, enabling the direct application of efficient unconstrained optimization methods to BO problems. 

The main focus of this paper is to design a single-loop algorithm for decentralized bilevel optimization (DBO). DBO considers BO in a decentralized network, where the data are naturally distributed to $n$ agents, each with access to their own local data and communication limited to their immediate neighbors in the network. The $n$ agents cooperatively solve the BO problem through local updates and communications. Decentralized optimization has many benefits, such as enhancing computational efficiency and sharing data while protecting data privacy \cite{lian2017can}.
The general form of DBO is given below:
\begin{align}\label{DBO}
\mathop{\min}\limits_{x \in \mathbb{R}^p}  &\Phi(x) = F(x,y^*(x)):= \frac{1}{n}\sum_{i=1}^{n}F_i(x,y^*(x))\quad\nonumber\\
\mathrm{s.t.} \   &y^*(x)=\arg\min_{y\in \mathbb{R}^q} f(x,y):=\frac{1}{n}\sum_{i=1}^{n}f_i(x,y),
\end{align}
where the $i$-th agent only has access to the data related to $F_i$ and $f_i$. 
To illustrate DBO, let us consider the example of training a binary classification model using medical data from $n$ different hospitals. Suppose we want to train a logistic regression model that can predict whether a tumor is benign or malignant based on medical data (such as MRI). The hospitals are local agents that cannot share their data with other hospitals to preserve patients' privacy. In this case, the hyperparameter tuning problem can be formulated as follows, and we need to design a decentralized algorithm to solve it in a distributed network:
\begin{align*}%\label{DBO_h}
\mathop{\min}\limits_{\lambda}  \frac{1}{n}\sum_{i=1}^{n}\sum_{(x_e,y_e)\in D^{\prime}_i}\psi(y_e x_e^{\top}\omega^*(\lambda)), \quad
\mathrm{s.t.} \   \omega^*(\lambda)=\arg\min_{\omega} \frac{1}{n}\sum_{i=1}^{n}\sum_{(x_e,y_e)\in D_i}\psi(y_e x_e^{\top}\omega)+\lambda \|\omega\|_2^2.
\end{align*}
In this problem, $\psi$ is a loss function used to train the logistic regression model. A commonly used loss function is the logistic loss, given by $\psi(t)=\log (1+e^{-t})$. 
The parameter vector $\omega$ comprises the regression parameters, while $\lambda$ represents the hyperparameter of the $\ell_2$-norm-squared regularizer, which is used to prevent overfitting.
The training and testing datasets from hospital $i$ are denoted by $D_i$ and $D^{\prime}_i$, respectively. 

The main challenge in designing a decentralized gradient method for solving the DBO \eqref{DBO} is how to compute the hypergradient. Note that, the hypergradient $\nabla \Phi(x)$ of \eqref{DBO} is given by
\begin{align}\label{hg}
\nabla\Phi(x)&=\frac{1}{n}\sum_{i=1}^{n}\nabla_1 F_i\left(x, y^*(x)\right) \nonumber\\
&\quad -\left[\frac{1}{n}\sum_{i=1}^{n}\nabla^2_{12} f_i\left(x, y^*(x)\right)\right]\left[\frac{1}{n}\sum_{i=1}^{n}\nabla_{22}^2 f_i\left(x, y^*(x)\right)\right]^{-1} \frac{1}{n}\sum_{i=1}^{n}\nabla_2 F_i\left(x, y^*(x)\right).
\end{align}
Calculation of the hypergradient \eqref{hg} is not possible through a single agent, and instead requires cooperative computation among all agents through communication.
The first algorithm for solving DBO \eqref{DBO} was due to \cite{chen2022decentralized}, where the authors proposed the DSBO algorithm that incorporates a decentralized algorithm to solve the linear system of equations in \eqref{hg}. The per-iteration complexity of the DSBO algorithm was later improved by the same authors by employing the moving average technique \cite{chen2023decentralized}. In \cite{lu2022stochastic}, the authors proposed a stochastic linearized augmented Lagrangian method (SLAM) for solving DBO \eqref{DBO}. Another type of distributed BO, Federated BO, has also been studied in the literature. For example, Yang \etal \cite{yang2023simfbo} proposed the SimFBO algorithm for solving \eqref{DBO} but in a federated network.
All these algorithms for decentralized BO and federated BO require certain gradient heterogeneity in order to guarantee the convergence or lower per-iteration complexity. However, this kind of assumption is very strong and may not hold in certain scenarios (see, e.g., \cite{pu2021distributed}). We list below the heterogeneity assumptions in these papers.
\begin{itemize}
	\item (DSBO,  \cite[Assumption 2.4]{chen2022decentralized}) Assume the data associated with $f_i$ is independent and identically distributed, $i=1,\ldots,n.$
	\item (MA-DSBO, \cite[Assumption 2.3]{chen2023decentralized}) For all $i$, there exists a constant $\delta \geq 0$ such that
	\begin{align*}
	\left\|\nabla_2 f_i(x, y)-\frac{1}{n} \sum\nolimits_{j=1}^n \nabla_2 f_j(x, y)\right\| \leq \delta, \quad \forall x,y.
	\end{align*}
	\item (SLAM,  \cite[Theorem 1]{lu2022stochastic}) For all $i$, there exists a constant $L \geq 0$ such that
	\begin{align*}
	\left\|\nabla^2_{22}f_i(x,y)-\frac{1}{n}\sum\nolimits_{j=1}^{n}\nabla^2_{22}f_j(x,y')\right\| \leq L \|y-y^{\prime}\|, \quad \forall x,y,y^{\prime}.
	\end{align*}
	It should be noted that if $y=y^{\prime}$, this assumption becomes $\nabla^2_{22}f_i(x,y)=\frac{1}{n}\sum_{j=1}^{n}\nabla^2_{22}f_j(x,y)$.
	\item (SimFBO, \cite[Assumption 4]{yang2023simfbo}) There exist constants $\delta_1\geq 1$ and $\delta_2\geq 0$ such that
	\begin{align*}
	\frac{1}{n}\sum_{i=1}^n \left\|\nabla_2 f_i(x, y)\right\|^2 \leq \delta_1^2\left\|\frac{1}{n}\sum\nolimits_{i=1}^n \nabla_2 f_i(x, y)\right\|^2+\delta_2^2, \quad \forall x,y.
	\end{align*}
\end{itemize}
These assumptions indicate the level of similarity between the local and the global objective functions. Our proposed algorithm, however, with the help of gradient tracking and projection technique, does not need any heterogeneity assumptions like these. 
Here we emphasize that in this paper we focus on deterministic DBO, because it is already very challenging. Extending the algorithms in this paper to stochastic DBO is definitely another very important task, and we leave it to a future work.

\paragraph{Main contributions.}
Our contributions in this work lie in several folds. First, we propose a single-loop algorithm for DBO, which has two main features: (i) it is of a single-loop structure; and (ii) it only needs two matrix-vector multiplications in each iteration. Second, we provide a convergence rate analysis for the proposed algorithm in the absence of any heterogeneity assumptions. This is in sharp contrast to existing works on decentralized BO and federated BO. Third, we demonstrate the great potential of our algorithm through numerical experiments on hyperparameter optimization.

\paragraph{Notation.}  We denote the optimal value of \eqref{DBO} as $F^*$. The gradients of $f$ with respect to $x$ and $y$ are denoted as $\nabla_1f(x,y)$ and $\nabla_2f(x,y)$ respectively, while the Jacobian matrix of $\nabla_1f$ and Hessian matrix of $f$ with respect to $y$ are denoted as $\nabla^2_{12}f(x,y)$ and $\nabla^2_{22}f(x,y)$ respectively.	If there is no further specification, it is assumed that $\|\cdot\|$ denotes the $\ell_2$ norm for vectors and the Frobenius norm for matrices. The operator norm of a matrix $Z$ is denoted by $\|Z\|_{\textrm{op}}$.

\section{A Single-Loop Algorithm for Decentralized Bilevel Optimization} \label{sec:alg}
In this section, we propose our single-loop algorithm for DBO (SLDBO). Its convergence results are given in Section \ref{sec:convergence}. Before presenting the algorithm, we specify our assumptions.

\subsection{Assumptions}\label{pre}
Throughout this paper, we adopt the following standard assumptions, which are commonly used in existing literature on bilevel optimization and decentralized optimization. For instance, Assumption \ref{Assump1} has been employed in previous works such as
\cite{ghadimi2018approximation, ji2021bilevel, chen2022decentralized,chen2023decentralized, ji2022will}, and Assumption \ref{Assump2} has been utilized in \cite{qu2017harnessing,nedic2017achieving,choi2023convergence,chen2022decentralized,chen2023decentralized,lu2022stochastic}. %These assumptions are established as standard in the literature and are taken as the basis for this paper.

\begin{assumption}\label{Assump1}
	The following assumptions hold for functions $F_i$ and $f_i$, $i=1,\ldots,n$, in \eqref{DBO}.
	\begin{enumerate}[(a)]
		\item For any fixed $x$, $f_i(x,\cdot)$ is $\sigma$-strongly convex, with $\sigma>0$ being a constant.

		\item The function $F_i$ is Lipschitz continuous with a Lipschitz constant of $L_{F,0}$, and its gradient $\nabla F_i$ is also Lipschitz continuous with a Lipschitz constant of $L_{F,1}$.

        \item The function $f_i$ is twice differentiable, with its gradient $\nabla f_i$ being Lipschitz continuous and having a Lipschitz constant of $L_{f,1}$. Moreover, the Hessian of $f_i$, denoted by $\nabla^2 f_i$, is also Lipschitz continuous with a Lipschitz constant of $L_{f,2}$.
	\end{enumerate}
\end{assumption}

\begin{assumption}[Network topology]\label{Assump2}
 Suppose the communication network is represented by a nonnegative weight matrix $W = (w_{ij})\in \mathbb{R}^{n\times n}$,
 where $w_{ij}=0$ if $i\neq j$ and agents $i$ and $j$ are not connected.
 Moreover, we assume that $W$ is symmetric and doubly stochastic, i.e. $W = W^{\mathsf{T}}$ and $W\mathbf{1_n} = \mathbf{1_n}$, where $\mathbf{1_n}$ is the all-one vector in $ \mathbb{R}^{n}$.
 Furthermore, the eigenvalues of $W$ satisfy $1 = \lambda_1> \lambda_2\geq \cdots \geq \lambda_n$ and $\rho := \max\{|\lambda_2|, |\lambda_n|\} < 1$.
\end{assumption}

\subsection{The Proposed SLDBO Algorithm}
Our goal is to extend the idea of SOBA \cite{dagreou2022framework} to handle the DBO \eqref{DBO}, which presents a significant challenge, especially without any heterogeneity assumptions. To tackle this, we propose to project $v^k$ onto a Euclidean ball with a pre-defined radius. By combining this with the gradient tracking technique, we successfully eliminate all heterogeneity assumptions.\footnote{Here we point out that shortly after we released the first version of our paper on arXiv, another work \cite{zhang2023communication} was uploaded to arxiv. The first version of \cite{zhang2023communication} extended SOBA to solving the stochastic DBO, but it did not provide any proof for the theoretical results. The second version of \cite{zhang2023communication} appeared on arxiv a few months later where the authors changed their algorithm in the first version to a different algorithm and a convergence analysis was provided for this new algorithm. We note that this new algorithm incorporated a projection step similar to the one used in our Algorithm \ref{alg:slDB}, and this projection step was missing in the algorithm in the first version of \cite{zhang2023communication}. This confirms the crucial role of our projection technique for a provably convergent algorithm for DBO.}
Before introducing our SLDBO algorithm, we define the following constants:
\begin{equation}\label{def-constants-main-text}
\left\{
\begin{array}{l}
r_v  := L_{F,0}/\sigma, \quad L_v:=\left(L_{F,1}+L_{f,2}r_v\right)\left(1+ L_{f,1}/\sigma \right),\quad L_1:=L_{F,1}+L_{f,2}r_v, \smallskip\\	
L_{\Phi}  :=L_{F, 1}+\frac{2 L_{F, 1} L_{f, 1}+L_{f, 2} L_{F, 0}^2}{\sigma}+\frac{2 L_{f, 1} L_{F, 0} L_{f, 2}+L_{f, 1}^2 L_{F, 1}}{\sigma^2}+\frac{L_{f, 2} L_{f, 1}^2 L_{F, 0}}{\sigma^3},  
\end{array}
\right.
\end{equation}
where constants such as $\sigma$, $L_{F,0}$, $L_{F,1}$, $L_{f,1}$, $L_{f,2}$ are defined in Assumptions \ref{Assump1}. We also define the projection operator $\mathcal{P}_{r}$ as
$\mathcal{P}_{r}[z]:= \arg\min_{\|z'\|\leq r}\|z'-z\|$,  which projects a given point onto the Euclidean ball with radius $r\geq 0$.

The details of our proposed algorithm, SLDBO, are presented in Algorithm \ref{alg:slDB}, while the setup of its initial points is shown in BOX \ref{box:initial}.
\vspace{2mm}

\noindent\fbox{
    \parbox{\textwidth}{
    \begin{itemize}
        \item Initial points $d_{x,i}^{-1} = d_{y,i}^{-1}=d_{v,i}^{-1}=t_{x,i}^{-1}=t_{y,i}^{-1}=t_{v,i}^{-1}=0$ $(i=1,2,\ldots,n)$.
        \item Initial points $x^{-1}_i = x^{-1}_j$, $y^{-1}_i = y^{-1}_j$, $v^{-1}_i = v^{-1}_j$, $i,j\in \{1,2,\ldots,n\}$, satisfying $\|v^{-1}_i\|\leq r_v$, where $r_v$ is defined in \eqref{def-constants-main-text}.        Note that the inequality can easily be satisfied by choosing $v_i^{-1}=0$, for instance.
        \item Compute initial points $x^0_i  =\sum_{j=1}^n w_{i j} (x_j^{-1}-\alpha t_{x, j}^{-1})=x^{-1}_i$,
        \[
        y^0_i   =\sum\nolimits_{j=1}^n w_{i j} (y_j^{-1}-\beta t_{y, j}^{-1})=y^{-1}_i
        \text{~~and~~}
        v^0_i   =\mathcal{P}_{r_v}\left[\sum\nolimits_{j=1}^n w_{i j} (v_j^{-1}+\eta t_{v, j}^{-1})\right]=v^{-1}_i,
        \]
        for $i=1,2,\ldots,n$, 
        where $ \alpha$, $\beta$ and $\eta $ are constant stepsizes.
 \end{itemize}
 \vspace{-0.3cm}
    \captionof{BOX}{Initialization of Algorithm \ref{alg:slDB}. \label{box:initial}}}
}

\begin{algorithm}[h]
	\caption{A Single-Loop Algorithm for DBO (SLDBO)}\label{alg:slDB}
	\begin{algorithmic}
             \STATE {\bf Input:} Let $K$ be the maximum iteration number and $r_v$ be defined in \eqref{def-constants-main-text}. Set initial points as in BOX \ref{box:initial}, as well as constant step sizes $ \alpha,\, \beta,\, \eta > 0$ satisfying the upper bounds in \eqref{eq:stepsize}. \;
		\FOR{$k=0,1,\ldots,K-1$}
		\FOR{$i=1,\ldots,n$}
		\STATE
  \vspace{-0.5cm}
		\begin{align}
        &d_{y,i}^k=\nabla_2 f_i(x_i^k,y_i^k);\label{yd}\\
        &d_{v,i}^k=\nabla_2 F_i(x_i^k,y_i^k) - \nabla^2_{22} f_i(x_i^k,y_i^k) v_i^k;\label{vd}\\
        &d_{x,i}^k=\nabla_1 F_i(x_i^k,y_i^k) - \nabla^2_{12} f_i(x_i^k,y_i^k) v_i^k;\label{xd}\\
		&t_{y, i}^k=\sum\nolimits_{j=1}^n w_{i j} t_{y, j}^{k-1}+d_{y,i}^k-d_{y, i}^{k-1},\quad
		y_i^{k+1}=\sum\nolimits_{j=1}^n w_{i j} (y_j^k-\beta  t_{y, j}^k); \label{yupdate}\\
		&t_{v, i}^k=\sum\nolimits_{j=1}^n w_{i j} t_{v, j}^{k-1}+d_{v,i}^k-d_{v, i}^{k-1},\quad v_i^{k+1}=\mathcal{P}_{r_v}\left[\sum\nolimits_{j=1}^n w_{i j}( v_j^k+\eta  t_{v, j}^k)\right];\label{vupdate}\\
		&t_{x, i}^k=\sum\nolimits_{j=1}^n w_{i j} t_{x, j}^{k-1}+d_{x,i}^k-d_{x, i}^{k-1},\quad x_i^{k+1}=\sum\nolimits_{j=1}^n w_{i j} (x_j^k-\alpha t_{x, j}^k).\label{xupdate}
		\end{align}
		\ENDFOR
		\ENDFOR
	\end{algorithmic}
\end{algorithm}

\begin{remark} \label{remark:alg-sldbo}

Some remarks on the SLDBO (Algorithm \ref{alg:slDB}) are in demand.

\begin{enumerate}[(i)]

    \item The $d_{y,i}^k$, $d_{v,i}^k$ and $d_{x,i}^k$ in \eqref{yd}-\eqref{xd} are the decentralized counterparts of $D_y^k$, $D_v^k$ and $D_x^k$ in SOBA \eqref{soba}.
    
    \item The updates for $y_i^{k+1}$, $v_i^{k+1}$, and $x_i^{k+1}$ outlined in \eqref{yupdate}-\eqref{xupdate} are based on similar ideas as in SOBA \eqref{soba}. However, since we are now addressing the decentralized problem \eqref{DBO}, we require communication steps using the communication matrix $W=(w_{ij})$. As it is introduced in Assumption \ref{Assump2}, if $w_{ij} > 0$, agent $j$ communicates with agent $i$, and the data communicated is multiplied by the weight $w_{ij}$. If $w_{ij}=0$, then there is no communication between the two agents. In this regard, we employ the adapt-then-combine diffusion strategy, which has been demonstrated to perform better in practical numerical experiments \cite{sayed2014adaptation}, rather than the combine-then-adapt diffusion strategy.
    
    \item  \label{re:proj} The updates for $t_{y,i}^k$, $t_{v,i}^k$, and $t_{x,i}^k$ described in \eqref{yupdate}-\eqref{xupdate} rely on the gradient tracking technique, a commonly employed strategy in distributed optimization literature \cite{di2016next,qu2017harnessing,nedic2017achieving}  and DBO literature \cite{chen2022decentralized,chen2023decentralized,gao2023convergence}. 
    Tracking sequences have been employed to eliminate heterogeneity assumptions for single-level problems. However, for DBO problems, an additional projection step is required in \eqref{vupdate} to accomplish this task. Specifically, the projection step ensures that the sequence $ \{v_i^k\} $ remains bounded, which is crucial for bounding the terms 
    $d_{x,i}^{k+1} - d_{x,i}^{k} $ and $ d_{v,i}^{k+1} - d_{v,i}^{k} $ in gradient tracking steps. We refer to Lemma \ref{dk+1-dk} in the Appendix for more discussions on this point.\end{enumerate}
\end{remark}

\section{Convergence Rate Results for SLDBO}\label{sec:convergence}
In this section, we provide the convergence results for Algorithm \ref{alg:slDB}.
Firstly, we construct a Lyapunov function, with the terms related to the consensus error. Then, by suitably selecting the parameters, we establish a descent property of the Lyapunov function, which further results in the $O(1/K)$ convergence rate for some stationarity measure. Our primary convergence rate results for Algorithm \ref{alg:slDB} are summarized in Theorem \ref{cr}.

\begin{theorem}\label{cr}
	For any integer $K\geq 1$, when $0\leq k\leq K$, define $\x^k = \frac{1}{n}\sum_{i=1}^{n}x^k_i$, $\y^k = \frac{1}{n}\sum_{i=1}^{n}y^k_i$ and $\v^k = \frac{1}{n}\sum_{i=1}^{n}v^k_i$.
	The following convergence rate results hold for Algorithm \ref{alg:slDB}.
	\begin{description}
		\item[(a) Stationarity.] For any integer $K\geq 1$, there holds
		$\min_{0 \le k \le {K-1}}  \|\nabla \Phi(\x^k)\|^2 = O\big(1/K\big)$.
		\item[(b) Consensus Error.]  For any integer $K\geq 1$, we have $	\min_{0 \le k \le {K-1}}\frac{1}{n}\sum_{i=1}^{n}\|x^k_i-\x^k\|^2=O\left(1/K\right)$, $\min_{0 \le k \le {K-1}}\frac{1}{n}\sum_{i=1}^{n}\|y^k_i-\y^k\|^2 =O\left(1/K\right)$ and $\min_{0 \le k \le {K-1}}\frac{1}{n}\sum_{i=1}^{n}\|v^k_i-\v^k\|^2=O\left(1/K\right)$.
	\end{description}
\end{theorem}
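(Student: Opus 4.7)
The plan is to build a Lyapunov function that aggregates four quantities: the suboptimality $\Phi(\bar{x}^k) - \Phi^*$, the LL tracking error $\|\bar{y}^k - y^*(\bar{x}^k)\|^2$, the linear-system tracking error $\|\bar{v}^k - v^*(\bar{x}^k)\|^2$ where $v^*(x) := [\nabla^2_{22} f(x,y^*(x))]^{-1} \nabla_2 F(x,y^*(x))$, and consensus-error terms for the iterate blocks $(x_i,y_i,v_i)$ together with the tracking blocks $(t_{x,i}, t_{y,i}, t_{v,i})$. Since $W$ is doubly stochastic, averaging the gradient-tracking recursions yields the clean identities $\bar{t}_y^k = \tfrac{1}{n}\sum_i d_{y,i}^k$ and similarly for $\bar{t}_v^k, \bar{t}_x^k$, so the averaged sequences $(\bar{x}^k,\bar{y}^k,\bar{v}^k)$ obey recursions that differ from the centralized SOBA updates only by the consensus errors.

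First I would derive three ``almost-descent'' inequalities at the averaged level. Using $\sigma$-strong convexity of $f$ in $y$ and smoothness from Assumption~\ref{Assump1}, a standard one-step analysis of $\bar{y}^{k+1} = \bar{y}^k - \beta\,\bar{t}_y^k$ gives
\begin{equation*}
\|\bar{y}^{k+1} - y^*(\bar{x}^{k+1})\|^2 \le (1-c_1\beta)\|\bar{y}^k - y^*(\bar{x}^k)\|^2 + O(\beta)\cdot E_y^k + O(\alpha^2/\beta)\|\bar{t}_x^k\|^2,
\end{equation*}
where $E_y^k$ denotes $y$-consensus error and the last term reflects the drift of $y^*(\bar{x}^k)$, which is Lipschitz with constant $L_{f,1}/\sigma$. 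An analogous inequality holds for $\|\bar{v}^{k+1} - v^*(\bar{x}^{k+1})\|^2$: here the strong monotonicity of $v \mapsto \nabla^2_{22}f v - \nabla_2 F$ (with modulus $\sigma$) gives linear contraction, and crucially the projection $\mathcal{P}_{r_v}$ is nonexpansive and $v^*(\bar{x}^k)$ lies in the ball of radius $r_v = L_{F,0}/\sigma$, so the projection never worsens the bound. Finally, $L_\Phi$-smoothness of $\Phi$ (as defined in \eqref{def-constants-main-text}) yields
\begin{equation*}
\Phi(\bar{x}^{k+1}) \le \Phi(\bar{x}^k) - \alpha\langle \nabla\Phi(\bar{x}^k), \bar{t}_x^k\rangle + \tfrac{L_\Phi \alpha^2}{2}\|\bar{t}_x^k\|^2,
\end{equation*}
and $\bar{t}_x^k$ differs from $\nabla\Phi(\bar{x}^k)$ by terms controlled by $\|\bar{y}^k - y^*(\bar{x}^k)\|$, $\|\bar{v}^k - v^*(\bar{x}^k)\|$, and consensus errors.

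Next I would handle the consensus errors via the standard gradient-tracking template. Using $\|W - \tfrac{1}{n}\mathbf{1}\mathbf{1}^\top\|_{\mathrm{op}} = \rho < 1$, the squared consensus errors for $x,y,v$ contract by a factor $\tfrac{1+\rho^2}{2}$ per step up to additive $O(\alpha^2)\|T_x^k\|^2 + O(\beta^2)\|T_y^k\|^2 + O(\eta^2)\|T_v^k\|^2$ terms, where $T_\star^k$ is the stacked tracking variable; the tracking variables in turn satisfy recursions controlled by $\|d^{k+1} - d^k\|$. This is where Remark~\ref{remark:alg-sldbo}\ref{re:proj} becomes essential: because $\|v_i^k\| \le r_v$ for all $i,k$ thanks to the projection, the differences $d_{v,i}^{k+1} - d_{v,i}^k$ and $d_{x,i}^{k+1} - d_{x,i}^k$ are Lipschitz in $(x_i^{k+1}-x_i^k, y_i^{k+1}-y_i^k, v_i^{k+1}-v_i^k)$ with constants depending only on $r_v, L_{F,1}, L_{f,1}, L_{f,2}$, and this is exactly the invoked Lemma~\ref{dk+1-dk}. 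Without the projection, $\|v_i^k\|$ could grow and these Lipschitz bounds would fail, preventing closure of the tracking recursion.

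With all pieces in hand, I would combine them into a Lyapunov function of the form
\begin{equation*}
\Psi^k = \Phi(\bar{x}^k) + c_y \|\bar{y}^k - y^*(\bar{x}^k)\|^2 + c_v \|\bar{v}^k - v^*(\bar{x}^k)\|^2 + c_{\mathrm{cons}}(E_x^k + E_y^k + E_v^k) + c_{\mathrm{trk}}(E_{t_x}^k + E_{t_y}^k + E_{t_v}^k),
\end{equation*}
and tune the weights $c_\bullet$ together with the stepsize ratios $\alpha,\beta,\eta$ (as allowed by \eqref{eq:stepsize}) so that the cross-terms from each inequality are absorbed and the aggregate inequality reads
\begin{equation*}
\Psi^{k+1} \le \Psi^k - c_0\alpha \|\nabla\Phi(\bar{x}^k)\|^2 - c_1(E_x^k + E_y^k + E_v^k),
\end{equation*}
for some $c_0, c_1 > 0$. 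Telescoping from $k=0$ to $K-1$, using that $\Psi^k$ is bounded below (since $\Phi \ge F^*$ and the other terms are nonnegative), and dividing by $K$ yields both claims of the theorem simultaneously. The main obstacle I anticipate is the fourth step: algebraically choosing the weights $c_y, c_v, c_{\mathrm{cons}}, c_{\mathrm{trk}}$ and matching them against the stepsize upper bounds in \eqref{eq:stepsize} so that every cross-term has a dominating negative counterpart; this is a tight linear-system-of-inequalities balancing act, particularly because the $v$-update couples to both $y$ and $x$ errors through $d_{v,i}^k = \nabla_2 F_i - \nabla^2_{22} f_i\, v_i^k$.
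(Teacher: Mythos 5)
Your proposal follows essentially the same route as the paper: the same Lyapunov function (objective gap plus $\|\bar y^k-y^*(\bar x^k)\|^2$, $\|\bar v^k-v^*(\bar x^k)\|^2$, and consensus errors of the iterates and tracking variables), the same averaged identities $\bar t_\star^k=\bar d_\star^k$, the same use of non-expansiveness of $\mathcal{P}_{r_v}$ together with $\|v^*(\bar x^k)\|\le r_v$, and the same role for the projection in making $\|d^{k+1}-d^k\|$ Lipschitz so the tracking recursions close. The only remaining work is the coefficient balancing you already flag, which is exactly what the paper's choice of $a_1,\dots,a_8$ and the stepsize bounds in \eqref{eq:stepsize} accomplish.
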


As per Part (a) of Theorem \ref{cr}, the convergence rate for stationarity is sublinear, with a rate of $O(1/K)$. This result is in line with the findings presented in \cite{ji2022will}. As far as we know, this is the currently best-known convergence rate result for both BO and DBO algorithms. Furthermore, in each iteration, our algorithm only requires $ \Theta(1) $ communication rounds and computational complexity, while other algorithms, such as those in \cite{chen2023decentralized,lu2022stochastic}, involve computation of $ \Theta(\log K) $ matrix-vector multiplications {per iteration}. This makes our algorithm computationally efficient and well-suited for large-scale DBO problems.

%\mbox{}\\
\noindent {\bf Roadmap of the Proof. }
Here we briefly describe the roadmap of the proof of Theorem \ref{cr} and the details are postponed to the Appendix.
First, we define the Lyapunov function
\begin{align}\label{Lyapunov}
V_k := & F(\bar{x}^k,y^*(\bar{x}^k)) - F^*
+ a_1 \|\bar{y}^k-y^*(\bar{x}^k)\|^2
+  a_2 \|\bar{v}^k-v^*(\bar{x}^k)\|^2 \nonumber\\
& + \frac{a_3}{n}\sum_{i=1}^{n}\|x_i^k-\x^k\|^2 + \frac{a_4}{n} \sum_{i=1}^{n}\|y_i^k-\y^k\|^2 + \frac{a_5}{n}\sum_{i=1}^{n}\|v_i^k-\v^k\|^2 \nonumber\\
& + \frac{a_6 \alpha^2}{n}\sum_{i=1}^{n}\|t_{x,i}^k-\t_x^k\|^2 + \frac{a_7 \beta^2}{n}\sum_{i=1}^{n}\|t_{y,i}^k-\t_y^k\|^2 + \frac{a_8 \eta^2}{n}\sum_{i=1}^{n}\|t_{v,i}^k-\t_v^k\|^2,
\end{align}
where we define $v^*(x) := \left[\nabla^2_{22} f(x,y^*(x))\right]^{-1}\nabla_2 F(x,y^*(x))$ and $ a_1,a_2,\ldots,a_8 $ are constants. We will show that, for another set of positive constants $A_1,A_2,\ldots,A_9$,
the following inequality holds:
\begin{align*}
V_{k+1} &- V_k \leq -\frac{\alpha}{2}\|\nabla \Phi (\bar{x}^k)\|^2
-\frac{A_1}{\alpha^2}\|\x^{k+1}-\x^k\|^2 - A_2 \|\y^k - y^*(\x^k)\|^2 -A_3  \|\v^k - v^*(\x^k)\|^2\\
&-  \frac{A_4}{n} \sum_{i=1}^n \|x_i^k - \x^k\|^2 - \frac{A_5 }{n} \sum_{i=1}^n \|y_i^k - \y^k\|^2 - \frac{A_6}{n} \sum_{i=1}^n \|v_i^k - \v^k\|^2\\
& -  \frac{A_7}{n} \sum_{i=1}^n \|t_{x,i}^k - \t_x^k\|^2 -  \frac{A_8}{n} \sum_{i=1}^n \|t_{y,i}^k - \t_y^k\|^2 -  \frac{A_9}{n} \sum_{i=1}^n \|t_{v,i}^k - \t_v^k\|^2.
\end{align*}
By taking the telescoping sum of this inequality, we can complete the proof.
The construction of the Lyapunov function \eqref{Lyapunov} and part of our proof were inspired by \cite{liu2023averaged}.

\section{Numerical Experiments}
In this section, we conduct experiments on hyperparameter optimization to evaluate the effectiveness of our proposed SLDBO. To test the decentralized setting, we used a local device equipped with 8 cores, i.e., $n = 8$, and employed mpi4py \cite{dalcin2021mpi4py} for parallel computing.

We adopted a ring topology to model the network for distributed computation, represented by a weight matrix
$W = (w_{ij})\in \mathbb{R}^{n\times n}$ given by: for $i,j=1,\ldots,n$,
$w_{ij}=w$ if $i=j$, $w_{ij}=(1-w)/2$ if $i=j\pm1$ or $(i,j)\in\{(1,n),(n,1)\}$, and $w_{ij}=0$ otherwise, where $w \in (0,1)$ is a constant. We take $w=0.4$ in our experiments.
In this ring topology, each agent has exactly two neighbours.
Our experiments involve both synthetic and real-world data.

\subsection{Synthetic Data}
We conduct logistic regression with $\ell_2$ regularization.
Let $\psi(t) = \log (1+e^{-t})$ for $t\in\mathbb{R}$ and $p$ be the dimension of the data.
Following \cite{chen2023decentralized}, on agent $i$, $i=1,\ldots,8$, we have
\begin{align*}
& F_i(\lambda, \omega)=\sum_{\left(x_e, y_e\right) \in \mathcal{D}_i^{\prime}} \psi\left(y_e x_e^{\top} \omega\right), \\
& f_i(\lambda, \omega)=\sum_{\left(x_e, y_e\right) \in \mathcal{D}_i} \psi\left(y_e x_e^{\top} \omega\right)+\frac{1}{2} \sum_{j=1}^p e^{\lambda_j} \omega_j^2,
\end{align*}
where $\mathcal{D}_i$ and $\mathcal{D}_i^{\prime}$ denote the training and  testing datasets on agent $i$, respectively.
We aim to identify the optimal hyperparameter $\lambda$ such that $\omega^*(\lambda)$ represents the optimal model parameter corresponding to $\lambda$.
To achieve this, we utilize synthetic heterogeneous data, generated in the same manner as in \cite{chen2023decentralized}. Specifically, the data distribution of $x_e$ on agent $i$ follows a normal distribution with mean $0$ and variance $i^2 \cdot r^2$, where $r$ is the heterogeneity rate. For the response variable, we let $y_e=x_e^{\top} w+0.1 z$, where $z$ is sampled from the standard normal distribution.

Note that the parameter $r$ controls the data heterogeneity rate. We first compare SLDBO with MA-DSBO (Moving Average-DSBO) \cite{chen2023decentralized} under conditions of low data heterogeneity with $r=1$ to investigate the advantages of the single-loop structure of SLDBO.
Full gradients are calculated in both algorithms, and we use a training dataset and a testing dataset consisting of 20,000 samples.
In SLDBO, we set $r_v=2$, ${\alpha}={\eta}=0.025$ and ${\beta} = 0.06$.
The results under different data dimensions are shown in Figures \ref{fig:d50}-\ref{fig:d200}.
MA-DSBO employs two key parameters, where $T$ represents the number of iterations performed in the inner loop, and $N$ represents the number of Hessian-inverse-gradient product iterations.
Comparing SLDBO and MA-DSBO, it is evident that SLDBO is faster. In particular,
SLDBO is much faster than MA-DSBO with $T=N=2$.
Note that MA-DSBO requires sufficient inner-loop iterations to accurately estimate the LL solution and the hypergradient.
By comparing the results in Figure \ref{fig:d50} ($p=50$) with those in Figure \ref{fig:d200} ($p=200$), we can observe that SLDBO demonstrates a greater improvement in convergence rate as the dimension of the data increases. SLDBO's single-loop structure offers an advantage in terms of reducing the number of required matrix-vector products, which proves to be particularly beneficial when dealing with high-dimensional data.

\begin{figure}[h]
	\centering
	\includegraphics[width=1\linewidth]{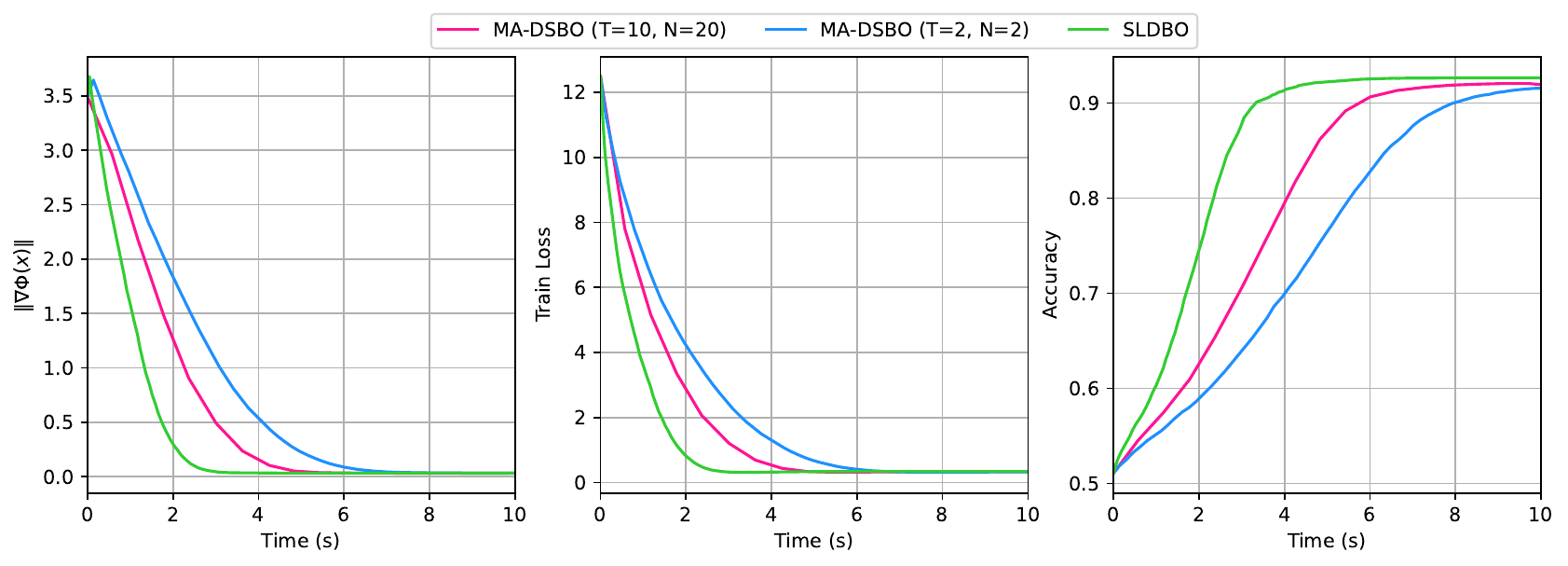}
	\caption{Comparison between MA-DSBO and SLDBO on synthetic data ($p=50$).}
	\label{fig:d50}
\end{figure}

\begin{figure}[h]
	\centering
	\includegraphics[width=1\linewidth]{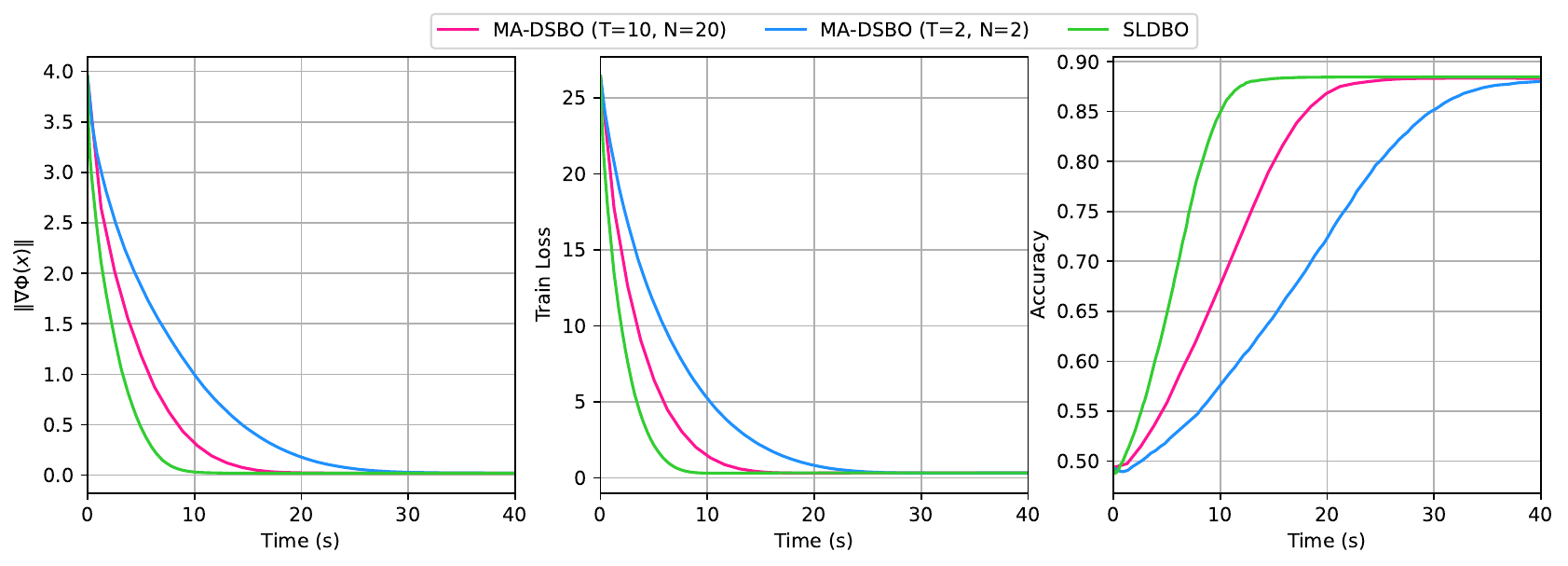}
	\caption{Comparison between MA-DSBO and SLDBO on synthetic data ($p=200$).}
	\label{fig:d200}
\end{figure}

Subsequently, we set different heterogeneity rates to highlight the exceptional performance of SLDBO under conditions of high data heterogeneity, as well as the necessity of the projection technique in SLDBO. The experimental results are illustrated in Figure \ref{fig:proj}. SLDBO (w/o proj.) represents SLDBO without the projection step, this means replacing \eqref{vupdate} with
\begin{align*}
t_{v, i}^k=\sum\nolimits_{j=1}^n w_{i j} t_{v, j}^{k-1}+d_{v,i}^k-d_{v, i}^{k-1},\quad v_i^{k+1}=\sum\nolimits_{j=1}^n w_{i j}( v_j^k+\eta  t_{v, j}^k).
\end{align*}
As shown in the left plot of Figure \ref{fig:proj}, when data heterogeneity is low ($ r=1 $), SLDBO with or without projection performs similarly, indicating that the projection step is almost ineffective. However, it is noteworthy that at a high level of data heterogeneity ($ r=40 $, the right plot of Figure \ref{fig:proj}), SLDBO without projection and MA-DSBO only achieve an accuracy of 0.5, while SLDBO (with projection) achieves an accuracy of about 0.96. This demonstrates the critical role of the projection technique in our proposed algorithm.

\begin{figure}[h]
	\centering
	\includegraphics[width=0.9\linewidth]{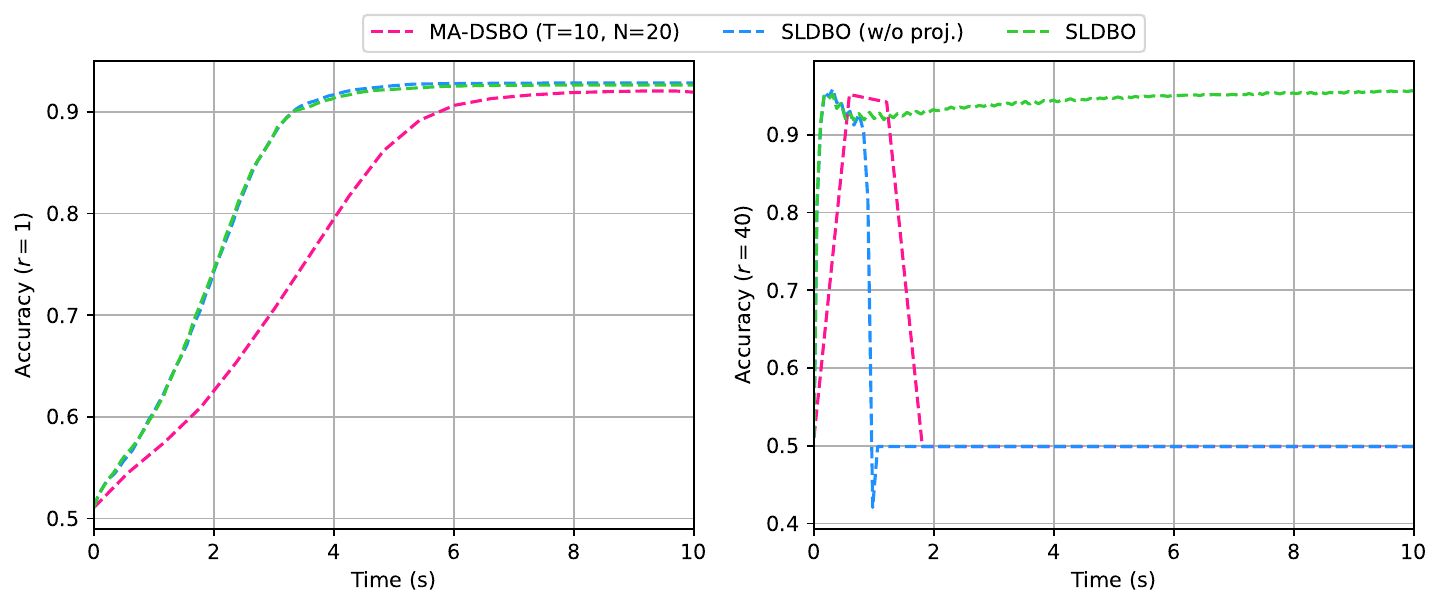}
	\caption{Comparison between MA-DSBO, SLDBO (w/o proj.) and SLDBO on synthetic data. Dimension: $ p=50 $. Heterogeneity rate: $ r=1 $ (left), $ r=40 $ (right).}
	\label{fig:proj}
\end{figure}

\subsection{Real-World Data}
Similar to the case of synthetic data, we define $\mathcal{D}_i$ and $\mathcal{D}_i^{\prime}$ as the training and testing datasets, respectively, for agent $i$.
We next apply SLDBO to solve the following hyperparameter problem using the MNIST database \cite{lecun1998gradient}:
\begin{align*}
F_i(\lambda, \omega)&= \frac{1}{|\mathcal{D}_i^{\prime}|}\sum_{(x_e,y_e)\in\mathcal{D}_i^{\prime}} L(x_e^{\top}\omega, y_e), \\
f_i(\lambda, \omega)&= \frac{1}{|\mathcal{D}_i|}\sum_{(x_e,y_e)\in \mathcal{D}_i}L(x_e^{\top}\omega, y_e) + \frac{1}{cp}\sum_{i=1}^{c}\sum_{j=1}^{p}e^{\lambda_j}\omega_{ij}^2,
\end{align*}
where $\omega \in \mathbb{R}^{c\times p}$ denotes the model parameter, $L$ denotes the cross entropy loss, and $|S|$ denotes the cardinality of a set $S$.
In our experiments, we set $c$ and $p$ to be $10$ and $784$, respectively. Here $c$ represents the number of classes and $p$ represents the number of features.
The training and testing sets comprise 60,000 samples each, with balanced representation across all classes.
To reduce the computational overhead associated with estimating gradients from a large dataset in our SLDBO algorithm, we adopt a technique inspired by stochastic gradient descent. Specifically, we extract a representative subset of samples to estimate the gradients instead of using the entire dataset.
For both the SLDBO and MA-DSBO algorithms, we set the batch size on each computing agent to 1,000. In the case of SLDBO, the hyperparameters were set as follows: $r_v=10$, ${\alpha}={\eta}=0.024$, and ${\beta}=0.06$. For MA-DSBO, we set $T=N=5$. Figure \ref{fig:slmnist} presents a comparison of the test loss, train loss, and classification accuracy between the SLDBO and MA-DSBO algorithms. These results demonstrate that our proposed algorithm SLDBO can efficiently solve this problem with improved convergence rate and classification accuracy.
\begin{figure}[h]
	\centering
	\includegraphics[width=0.9\linewidth]{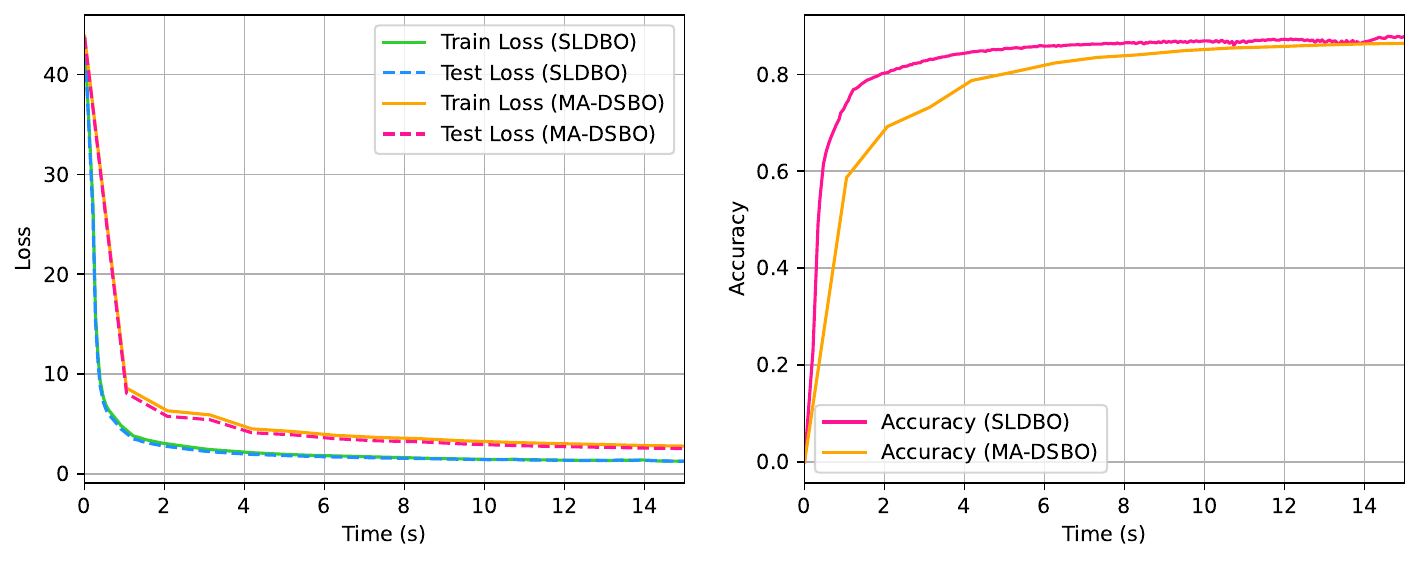}
	\caption{Comparison of test loss, train loss, and classification accuracy between MA-DSBO and SLDBO on real-world MNIST dataset.}
	\label{fig:slmnist}
\end{figure}

\section{Concluding Remarks}
This paper presents a novel single-loop algorithm, called SLDBO, for efficiently solving DBO problems with a guaranteed sublinear convergence rate. Notably, SLDBO is the first single-loop algorithm for DBO that operates without nested matrix-vector products (only two matrix-vector products are required at each iteration) and does not make any assumptions related to data  heterogeneity. Our numerical experiments confirm the effectiveness of SLDBO. 
Nevertheless, we acknowledge that computing the full gradient during practical applications can be time-consuming. Therefore, it is worth investigating the extension of our algorithm to the stochastic setting. Additionally, techniques for saving communications, such as those used in E-AiPOD \cite{xiao2023alternating}, could be integrated into our algorithm for decentralized settings. These topics are left for further investigation.	

\begin{spacing}{1}
\small
\bibliographystyle{abbrv}
\bibliography{sldbo.bib}
\end{spacing}

\appendix

\section{Proof of the Convergence Results}\label{appendix}

In this appendix, we provide the proof of our convergence results, i.e., Theorem \ref{cr}. Assumption \ref{Assump1} and \ref{Assump2} are used throughout the proof.

\subsection{Notation, Constants and Basic Lemmas}

For the ease of presentation, we define some notation below.
\begin{equation}\label{def:bar9}
\left\{
\begin{array}{lll}
{\x^{k}} := \frac{1}{n}\sum_{i=1}^{n}x_{i}^k, & {\y^{k}} := \frac{1}{n}\sum_{i=1}^{n}y_{i}^k, & {\v^{k}} := \frac{1}{n}\sum_{i=1}^{n}v_{i}^k, \smallskip\\
\d_x^k := \frac{1}{n}\sum_{i=1}^{n}d_{x,i}^k, & \d_y^k := \frac{1}{n}\sum_{i=1}^{n}d_{y,i}^k, & \d_v^k := \frac{1}{n}\sum_{i=1}^{n}d_{v,i}^k, \smallskip\\
\t_x^k := \frac{1}{n}\sum_{i=1}^{n}t_{x,i}^k, & \t_y^k := \frac{1}{n}\sum_{i=1}^{n}t_{y,i}^k, & \t_v^k := \frac{1}{n}\sum_{i=1}^{n}t_{v,i}^k.
\end{array}
\right.
\end{equation}
Recall that $r_v$ is defined in  \eqref{def-constants-main-text}.
To assist in our proof, we now define some additional constants as follows:
\begin{equation} \label{def-constants-proof}
C_1  := 3\max\{(L_{F,1}+r_{v}L_{f,2})^2,L_{f,1}^2\}, \quad C_2  := L_{f,1}^2.
\end{equation}

Before the proof, we present some useful lemmas. Among others, Lemma \ref{con_sqr} is a common result in decentralized optimization, see, e.g., \cite[Lemma 1]{pu2021distributed}. For completeness, we provide the proof.

\begin{lemma}\label{con_sqr}
	Consider the mixing matrix $W=(w_{ij})\in \mathbb{R}^{n\times n}$ defined in Assumption \ref{Assump2}, for any $x_1, \ldots, x_n \in \mathbb{R}^{d}$, let $\x = \frac{1}{n}\sum_{i=1}^{n}x_i$, we have
	\begin{description}
		\item[(a)] $	\sum_{i=1}^n\big\|\sum_{j=1}^n w_{i j}x_j\big\|^2\leq  \sum_{j=1}^n\left\|x_j\right\|^2$.
		\item[(b)] $\sum_{i=1}^n\big\|\sum_{j=1}^n w_{i j}\left(x_j-\x\right)\big\|^2\leq \rho^2 \sum_{i=1}^n\left\|x_i-\x\right\|^2$.
	\end{description}
\end{lemma}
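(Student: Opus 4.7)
\medskip

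\noindent\textbf{Proof plan.} Both parts are standard consequences of the doubly stochastic and spectral properties of $W$ given in Assumption \ref{Assump2}, and I would organize the two parts in parallel using matrix notation. Collect the vectors into a matrix $X \in \mathbb{R}^{n\times d}$ whose $i$th row is $x_i^{\top}$, so that the $i$th row of $WX$ is exactly $\sum_{j=1}^n w_{ij}x_j$; consequently
\begin{equation*}
\sum_{i=1}^n\Bigl\|\sum_{j=1}^n w_{ij}x_j\Bigr\|^2 = \|WX\|_F^2, \qquad \sum_{i=1}^n\Bigl\|\sum_{j=1}^n w_{ij}(x_j-\bar{x})\Bigr\|^2 = \|W\widetilde{X}\|_F^2,
\end{equation*}
where $\widetilde{X} := X - \mathbf{1}_n \bar{x}^{\top}$.

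\medskip

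\noindent\textbf{Part (a).} The plan is to first apply Jensen's inequality coordinate-wise. Since $w_{ij}\geq 0$ and $\sum_{j=1}^n w_{ij}=1$ (row stochasticity from Assumption \ref{Assump2}), the convexity of $\|\cdot\|^2$ gives $\|\sum_{j}w_{ij}x_j\|^2 \leq \sum_{j}w_{ij}\|x_j\|^2$. Summing over $i$ and swapping the order of summation yields $\sum_{i}\sum_{j}w_{ij}\|x_j\|^2 = \sum_{j}(\sum_i w_{ij})\|x_j\|^2 = \sum_j \|x_j\|^2$, where the last step uses column stochasticity $\sum_{i}w_{ij}=1$ (which follows from $W^{\top}\mathbf{1}_n=W\mathbf{1}_n=\mathbf{1}_n$). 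This closes part (a).

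\medskip

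\noindent\textbf{Part (b).} I would leverage the spectral decomposition of $W$ on the subspace orthogonal to $\mathbf{1}_n$. The key observation is that every column of $\widetilde{X}$ sums to zero: for the $\ell$th column, $\mathbf{1}_n^{\top}\widetilde{X}_{:,\ell} = \sum_{i}(x_{i,\ell}-\bar{x}_\ell)=0$. Hence each column lies in the subspace $\mathbf{1}_n^{\perp}$. By Assumption \ref{Assump2}, $W$ is symmetric with eigenvalues $1=\lambda_1>\lambda_2\geq\cdots\geq\lambda_n$ and $\rho=\max\{|\lambda_2|,|\lambda_n|\}<1$. The eigenvector $\mathbf{1}_n/\sqrt{n}$ corresponds to $\lambda_1$, and on its orthogonal complement the operator norm of $W$ is exactly $\rho$. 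Therefore, for any column $\widetilde{X}_{:,\ell}$,
\begin{equation*}
\|W\widetilde{X}_{:,\ell}\|_2^2 \leq \rho^2 \|\widetilde{X}_{:,\ell}\|_2^2.
\end{equation*}
Summing over $\ell=1,\dots,d$ and using $\|M\|_F^2 = \sum_{\ell}\|M_{:,\ell}\|_2^2$ gives $\|W\widetilde{X}\|_F^2 \leq \rho^2 \|\widetilde{X}\|_F^2 = \rho^2 \sum_{i=1}^n\|x_i-\bar{x}\|^2$, which is the desired inequality.

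\medskip

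\noindent\textbf{Main obstacle.} There is essentially no analytic obstacle here; this is a textbook fact about doubly stochastic mixing matrices. The only point to be careful about is the cleanest justification that $W$ restricted to $\mathbf{1}_n^{\perp}$ has operator norm $\rho$; this requires symmetry of $W$ so that its eigenvectors form an orthonormal basis, which is guaranteed by Assumption \ref{Assump2}. Once this spectral viewpoint is set up, both (a) and (b) are short.
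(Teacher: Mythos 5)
Your proof is correct and follows essentially the same route as the paper: part (a) via Jensen's inequality plus double stochasticity, and part (b) via the spectral gap of the symmetric matrix $W$ on the consensus subspace. The only cosmetic difference is that you bound $\|W\widetilde{X}\|_F$ by restricting $W$ to $\mathbf{1}_n^{\perp}$ column by column, whereas the paper equivalently bounds $\|WX-\tfrac{1}{n}\mathbf{1}_n\mathbf{1}_n^{\top}X\|_F$ using $\|W-\tfrac{1}{n}\mathbf{1}_n\mathbf{1}_n^{\top}\|_{\textrm{op}}=\rho$; these are the same argument.
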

\begin{proof}
{\bf (a).} $\sum_{i=1}^n \|\sum_{j=1}^n w_{i j}x_j\|^2\leq \sum_{i=1}^n\sum_{j=1}^n w_{i j}\|x_j\|^2 = \sum_{j=1}^n\sum_{i=1}^n w_{i j}\|x_j\|^2 = \sum_{j=1}^n\|x_j\|^2$,
where the inequality follows from the convexity of $\|\cdot\|^2$ and the last equality is due to $\sum_{i=1}^n w_{i j}=1$.
{\bf (b).} Define $X = [x_1^{\top}; x_2^{\top}; \ldots; x_n^{\top}]\in \mathbb{R}^{n\times d}$.
Then,        we have
\begin{align}\label{w1}
\left\|WX-\frac{1}{n}\mathbf{1_n}\mathbf{1_n^{\top}}X \right\|  \leq \left\|W-\frac{1}{n}\mathbf{1_n}\mathbf{1_n^{\top}} \right\|_{\textrm{op}} \left\|X-\frac{1}{n}\mathbf{1_n}\mathbf{1_n^{\top}}X\right\|,
\end{align}
where the inequality holds because $\left(W-\frac{1}{n}\mathbf{1_n}\mathbf{1_n^{\top}}\right)\frac{1}{n}\mathbf{1_n}\mathbf{1_n^{\top}}X=0$ by Assumption \ref{Assump2}.
Since $W$ and $\frac{1}{n}\mathbf{1_n}\mathbf{1_n^{\top}}$ are both symmetric and commute with each other, they are simultaneously diagonalizable, i.e., there exists an orthonormal matrix $P$ such that
$$W = P\text{diag}(\lambda_1, \lambda_2, \ldots, \lambda_n)P^{-1} \text{~~and~~} \frac{1}{n}\mathbf{1_n}\mathbf{1_n^{\top}} =  P\text{diag}(1, 0,\cdots,0)P^{-1},$$
where the eigenvalues of $W$ satisfy $1 = \lambda_1> \lambda_2\geq \ldots\geq \lambda_n$ and $\rho = \max\{|\lambda_2|, |\lambda_n|\} < 1$, as stated in Assumption \ref{Assump2}. Hence, it is easy to verify that
$\|W-\frac{1}{n}\mathbf{1_n}\mathbf{1_n^{\top}}\|_{\textrm{op}} = \|\text{diag}(0, \lambda_2, \ldots, \lambda_n)\|_{\textrm{op}} = \rho$.
Moreover, we have
\begin{align}\label{w3}
\left\|WX-\frac{1}{n}\mathbf{1_n}\mathbf{1_n^{\top}}X \right\|^2 = \left\|WX-\mathbf{1_n}\x^{\top}\right\|^2=\sum\nolimits_{i=1}^n\left\|\sum\nolimits_{j=1}^n w_{i j}\left(x_j-\x\right)\right\|^2
\end{align}
and
\begin{align}\label{w4}
\left\|X-\frac{1}{n}\mathbf{1_n}\mathbf{1_n^{\top}}X\right\|^2 = \left\|X-\mathbf{1_n}\x^{\top}\right\|^2=\sum_{i=1}^n\left\|x_i-\x\right\|^2.
\end{align}
The desired result follows by combining \eqref{w1}-\eqref{w4}
and $\|W-\frac{1}{n}\mathbf{1_n}\mathbf{1_n^{\top}}\|_{\textrm{op}} = \rho$.
\end{proof}

Lemma \ref{proj} is adopted from Lemma 3.2 in \cite{choi2023convergence}, and it describes the relationship between consensus error before and after projection. Recall that $\mathcal{P}_{r}[z]:=\argmin_{\{z':\|z'\|\in[0,r]\}}\|z'-z\|$.

\begin{lemma}\label{proj}
	For any $x_1, \ldots, x_n \in \mathbb{R}^{d}$, we have
	\begin{align*}    \sum_{i=1}^n\left\|\mathcal{P}_r\left[x_i\right]-\frac{1}{n} \sum\nolimits_{j=1}^n \mathcal{P}_r\left[x_j\right]\right\|^2 \leq \sum_{i=1}^n\left\|x_i-\frac{1}{n} \sum\nolimits_{j=1}^n x_j\right\|^2.
	\end{align*}
\end{lemma}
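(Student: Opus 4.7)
The plan is to exploit two elementary facts: (i) the mean of a finite set of vectors is the unique minimizer of the sum of squared Euclidean distances to the points, and (ii) the projection $\mathcal{P}_r$ onto the closed ball of radius $r$ is nonexpansive, being the projection onto a closed convex set.

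More explicitly, write $\bar{x} := \frac{1}{n}\sum_{j=1}^n x_j$ and $\bar{z} := \frac{1}{n}\sum_{j=1}^n \mathcal{P}_r[x_j]$. Using the variational characterization $\bar{z} = \arg\min_{c\in\mathbb{R}^d}\sum_{i=1}^n \|\mathcal{P}_r[x_i]-c\|^2$, the left-hand side of the claim is bounded above by plugging in any convenient competitor $c$. The natural choice is $c = \mathcal{P}_r[\bar{x}]$, which gives
\[
\sum_{i=1}^n\bigl\|\mathcal{P}_r[x_i]-\bar{z}\bigr\|^2 \;\le\; \sum_{i=1}^n\bigl\|\mathcal{P}_r[x_i]-\mathcal{P}_r[\bar{x}]\bigr\|^2.
\]
Then I would apply the nonexpansiveness of $\mathcal{P}_r$ termwise, $\|\mathcal{P}_r[x_i]-\mathcal{P}_r[\bar{x}]\|\le\|x_i-\bar{x}\|$, and sum to obtain the desired inequality. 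This essentially reduces the lemma to two one-line facts and their composition.

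There is no real obstacle here; the only point deserving attention is invoking nonexpansiveness, which follows because $\{z:\|z\|\le r\}$ is closed and convex, so $\mathcal{P}_r$ is firmly nonexpansive. If one wants to be self-contained, nonexpansiveness can be derived from the first-order optimality condition $\langle \mathcal{P}_r[u]-u,\,w-\mathcal{P}_r[u]\rangle\ge 0$ for all $w$ in the ball, applied with $u=x_i$, $w=\mathcal{P}_r[\bar{x}]$ and symmetrically with $u=\bar{x}$, $w=\mathcal{P}_r[x_i]$, then adding the two inequalities and using Cauchy--Schwarz.
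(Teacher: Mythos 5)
Your proof is correct and follows essentially the same route as the paper: both use the fact that the mean $\frac{1}{n}\sum_j \mathcal{P}_r[x_j]$ minimizes $c\mapsto\sum_i\|\mathcal{P}_r[x_i]-c\|^2$, substitute the competitor $\mathcal{P}_r\bigl[\frac{1}{n}\sum_j x_j\bigr]$, and finish by termwise nonexpansiveness of the projection. The additional remark on deriving nonexpansiveness from the first-order optimality condition is a fine (optional) elaboration of a fact the paper simply cites.
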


\begin{frame}
Consider the function $G(x) = \sum_{i=1}^n\left\|\mathcal{P}_r\left[x_i\right]-x\right\|^2$,
which is minimized by $x = \frac{1}{n}\sum_{i=1}^{n}\mathcal{P}_r\left[x_i\right]$. Therefore, we have
$$\sum_{i=1}^n\left\|\mathcal{P}_r\left[x_i\right]-\frac{1}{n} \sum\nolimits_{j=1}^n \mathcal{P}_r\left[x_j\right]\right\|^2 \leq \sum_{i=1}^n\left\|\mathcal{P}_r\left[x_i\right]-\mathcal{P}_r\left[\frac{1}{n} \sum\nolimits_{j=1}^n x_j\right]\right\|^2\leq \sum_{i=1}^n\left\|x_i-\frac{1}{n} \sum\nolimits_{j=1}^n x_j\right\|^2,$$
where the last inequality holds because the projection operator is non-expansive.
\end{frame}

\subsection{Consensus Error of Algorithm \ref{alg:slDB}}

Recall that ${\x^{k}}$, ${\y^{k}}$, ${\v^{k}}$, $\d_x^k$, $\d_y^k$, $\d_v^k$, $\t_x^k$, $\t_y^k$ and $\t_v^k$ are defined in \eqref{def:bar9}. In this section, we bound the terms related to the consensus error in \eqref{Lyapunov}.
We first prove some useful lemmas.

\begin{lemma}\label{bar}
	The sequence $\{v_i^k\}$ generated by Algorithm \ref{alg:slDB} satisfies
	\begin{align}\label{v-vbar}
	\sum_{i=1}^{n}\|v_i^{k+1}-\v^{k+1}\|^2 \leq \rho\sum_{i=1}^{n}\|v_i^{k} - \v^k\|^2 + \frac{\rho^2\eta ^2}{1-\rho}\sum_{i=1}^{n}\|t_{v,i}^k - \t_v^k\|^2.
	\end{align}
\end{lemma}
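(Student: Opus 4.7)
The plan is to combine three ingredients already available: the non-expansiveness of the projection applied in a consensus form (Lemma~\ref{proj}), the contraction property of $W$ around the mean (Lemma~\ref{con_sqr}(b)), and a Young-type splitting of the squared norm to separate the $v$-term from the $t_v$-term while converting the factor $\rho^2$ into the desired $\rho$.

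First, I would introduce the auxiliary vector $u_i^k := v_i^k + \eta\, t_{v,i}^k$ so that the update \eqref{vupdate} reads $v_i^{k+1} = \mathcal{P}_{r_v}\bigl[\sum_j w_{ij} u_j^k\bigr]$. Its mean is $\bar u^k = \v^k + \eta \t_v^k$ by linearity. Applying Lemma~\ref{proj} to the points $\bigl\{\sum_j w_{ij} u_j^k\bigr\}_{i=1}^n$ yields
\begin{equation*}
\sum_{i=1}^{n}\|v_i^{k+1}-\v^{k+1}\|^2 \;\leq\; \sum_{i=1}^{n}\Bigl\|\sum_{j=1}^{n} w_{ij} u_j^k - \tfrac{1}{n}\sum_{l=1}^n\sum_{j=1}^{n} w_{lj} u_j^k\Bigr\|^2.
\end{equation*}
Using that $W$ is doubly stochastic, the inner average collapses to $\bar u^k$, and since each row of $W$ sums to $1$ we may subtract $\bar u^k$ inside the weighted sum, obtaining $\sum_i \|\sum_j w_{ij}(u_j^k - \bar u^k)\|^2$. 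Lemma~\ref{con_sqr}(b) then gives the bound $\rho^2 \sum_i \|u_i^k - \bar u^k\|^2$.

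Next, writing $u_i^k - \bar u^k = (v_i^k - \v^k) + \eta(t_{v,i}^k - \t_v^k)$ and applying the elementary inequality $\|a+b\|^2 \le (1+c)\|a\|^2 + (1+1/c)\|b\|^2$ with $c = (1-\rho)/\rho > 0$, I get $1+c = 1/\rho$ and $1 + 1/c = 1/(1-\rho)$. Multiplying by $\rho^2$ yields precisely
\begin{equation*}
\rho^2 \sum_{i=1}^n \|u_i^k - \bar u^k\|^2 \;\leq\; \rho \sum_{i=1}^n \|v_i^k - \v^k\|^2 + \frac{\rho^2 \eta^2}{1-\rho} \sum_{i=1}^n \|t_{v,i}^k - \t_v^k\|^2,
\end{equation*}
which, chained with the previous two bounds, is \eqref{v-vbar}.

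There is no real obstacle here; each step is a direct invocation of one of the preceding lemmas. The only point that requires a little care is choosing the Young parameter $c = (1-\rho)/\rho$ so that the leading coefficient on the $v$-consensus term tightens from $\rho^2$ to $\rho$ (which is what the Lyapunov analysis in Section~\ref{sec:convergence} will need) while producing the precise factor $\rho^2/(1-\rho)$ on the tracking term. It is also worth noting (though not needed in the argument) that the projection appears only in the $v$-update, so the analogous consensus bounds for $x$ and $y$ will follow from the same template with the projection step omitted.
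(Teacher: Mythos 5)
Your proof is correct and follows essentially the same route as the paper: both arguments chain Lemma~\ref{proj}, Lemma~\ref{con_sqr}(b), and a Young-type splitting with parameter $c=(1-\rho)/\rho$. The only (immaterial) difference is the order of the last two steps --- the paper splits $\sum_j w_{ij}(v_j^k+\eta t_{v,j}^k)$ into its $v$- and $t_v$-parts first and then applies the $\rho^2$-contraction to each, whereas you contract the combined vector $u_i^k$ first and split afterwards; the two orderings commute and yield identical constants.
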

\begin{proof}
	In fact, we have
	\begin{align*}
	\sum_{i=1}^{n}\|v_i^{k+1}&-\v^{k+1}\|^2
	= \sum_{i=1}^{n} \left\|\mathcal{P}_{r_v}\left[\sum\nolimits_{j=1}^{n}w_{ij}(v_j^{k}+\eta t_{v,j}^k)\right]-\frac{1}{n}\sum_{s=1}^{n}\mathcal{P}_{r_v}\left[\sum\nolimits_{j=1}^{n}w_{sj}(v_j^{k}+\eta t_{v,j}^k)\right]\right\|^2\\
	& \leq   \sum_{i=1}^{n} \left\|\sum_{j=1}^{n}w_{ij}(v_j^{k}+\eta t_{v,j}^k)-\frac{1}{n}\sum_{s=1}^{n}\sum_{j=1}^{n}w_{sj}(v_j^{k}+\eta t_{v,j}^k)\right\|^2\\
	&\leq \left(1+\frac{1-\rho}{\rho}\right) \sum_{i=1}^{n}\left\|\sum\nolimits_{j=1}^{n}w_{ij}v_j^{k} - \v^k\right\|^2 + \left(1+\frac{\rho}{1-\rho}\right)\eta^2\sum_{i=1}^{n}\left\|\sum\nolimits_{j=1}^{n}w_{ij}t_{v,j}^k - \t_v^k\right\|^2\\
	&\leq  \rho\sum_{i=1}^{n}\|v_i^{k} - \v^k\|^2 + \frac{\rho^2\eta ^2}{1-\rho}\sum_{i=1}^{n}\|t_{v,i}^k - \t_v^k\|^2,
	\end{align*}
	where the first inequality follows from Lemma \ref{proj},
	the second follows from Cauchy-Schwartz inequality and the notation defined in \eqref{def:bar9}, 
	the third follows from Lemma \ref{con_sqr}.
\end{proof}

\begin{remark}
	By following similar deductions as in Lemma \ref{bar}, we can derive (details are omitted to avoid redundancy)
	\begin{align}
	\sum_{i=1}^{n}\|x_i^{k+1}-\x^{k+1}\|^2 \leq \rho\sum_{i=1}^{n}\|x_i^{k} - \x^k\|^2 + \frac{\rho^2\alpha ^2}{1-\rho}\sum_{i=1}^{n}\|t_{x,i}^k - \t_x^k\|^2, \label{x-xbar}\\
	\sum_{i=1}^{n}\|y_i^{k+1}-\y^{k+1}\|^2 \leq \rho\sum_{i=1}^{n}\|y_i^{k} - \y^k\|^2 + \frac{\rho^2\beta ^2}{1-\rho}\sum_{i=1}^{n}\|t_{y,i}^k - \t_y^k\|^2. \label{y-ybar}
	\end{align}
\end{remark}

\begin{lemma}\label{tx_bounded}
	The sequences $\{t_{x,i}^k\}$ and $\{d_{x,i}^k\}$ generated by Algorithm \ref{alg:slDB} satisfy
	\begin{align}
	\sum_{i=1}^{n}\|t_{x,i}^{k+1}-\t_x^{k+1}\|^2 \leq \rho \sum_{i=1}^{n}\|t_{x,i}^{k}-\t_x^{k}\|^2 + \frac{1}{1-\rho}\sum_{i=1}^{n}\|d_{x,i}^{k+1} - d_{x,i}^{k}\|^2. \label{tx-bartx}
	\end{align}
\end{lemma}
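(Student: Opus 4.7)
The plan is to decompose $t_{x,i}^{k+1} - \t_x^{k+1}$ into a "mixing" part that contracts by a factor of $\rho$ and a "driving" part that is controlled by the successive differences $d_{x,i}^{k+1} - d_{x,i}^k$, then apply Lemma \ref{con_sqr}(b) and a standard Young-type split.

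First, I would write out the tracking update at iteration $k+1$, namely
\begin{align*}
t_{x,i}^{k+1} = \sum_{j=1}^n w_{ij} t_{x,j}^{k} + d_{x,i}^{k+1} - d_{x,i}^{k}.
\end{align*}
Because $W$ is doubly stochastic, averaging over $i$ gives $\t_x^{k+1} = \t_x^{k} + \d_x^{k+1} - \d_x^{k}$, and subtracting these two identities yields
\begin{align*}
t_{x,i}^{k+1} - \t_x^{k+1} = \sum_{j=1}^n w_{ij}\bigl(t_{x,j}^{k} - \t_x^{k}\bigr) + \bigl(d_{x,i}^{k+1} - d_{x,i}^{k}\bigr) - \bigl(\d_x^{k+1} - \d_x^{k}\bigr).
\end{align*}

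Next I would apply the Young-type inequality $\|a+b\|^2 \leq (1+c)\|a\|^2 + (1 + 1/c)\|b\|^2$ with $c = (1-\rho)/\rho$, chosen precisely so that $(1+c)\rho^2 = \rho$ and $1 + 1/c = 1/(1-\rho)$. Summing over $i$ and invoking Lemma \ref{con_sqr}(b) on the mixing term gives
\begin{align*}
\sum_{i=1}^n \|t_{x,i}^{k+1} - \t_x^{k+1}\|^2 \leq \rho \sum_{i=1}^n \|t_{x,i}^{k} - \t_x^{k}\|^2 + \frac{1}{1-\rho}\sum_{i=1}^n \bigl\|(d_{x,i}^{k+1} - d_{x,i}^{k}) - (\d_x^{k+1} - \d_x^{k})\bigr\|^2.
\end{align*}

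Finally, to pass from the centered sum on the right to the uncentered sum appearing in the claim, I would use the elementary identity $\sum_i \|z_i - \bar z\|^2 = \sum_i \|z_i\|^2 - n\|\bar z\|^2 \leq \sum_i \|z_i\|^2$ with $z_i = d_{x,i}^{k+1} - d_{x,i}^{k}$, which gives exactly the desired bound \eqref{tx-bartx}. No serious obstacle is anticipated here; the only delicate step is the tuning of the Young's-inequality parameter $c$, which is forced by the requirement that the contraction factor $\rho^2$ coming from Lemma \ref{con_sqr}(b) be absorbed into the exact factor $\rho$ demanded by the statement. The analogous inequalities for $t_{y,i}^k$ and $t_{v,i}^k$ follow by an identical argument.
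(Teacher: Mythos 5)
Your proposal is correct and follows essentially the same route as the paper: both decompose $t_{x,i}^{k+1}-\t_x^{k+1}$ into a mixing term handled by Lemma \ref{con_sqr}(b) and a driving term $d_{x,i}^{k+1}-d_{x,i}^k$, apply Young's inequality with the same parameter $c=(1-\rho)/\rho$, and discard a nonpositive term of the form $-n\|\d_x^{k+1}-\d_x^k\|^2$. The only difference is organizational --- the paper centers at $\t_x^k$ and cancels the cross term explicitly, while you center both sides at their means and drop $n\|\bar z\|^2$ at the end --- which yields the identical bound.
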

\begin{proof}
    From the update of $x_i^k$ and $t_{x,i}^k$ in \eqref{xupdate}, we have
$\t^k_x=\t^{k-1}_x+\d^k_x-\d^{k-1}_x$, 	$\t^{-1}_x=\d^{-1}_x=0 $ and $\x^{k+1}=\x^{k}-\alpha\t^k_x$,
	which implies (by induction)
	\begin{align} \label{xiter}
	\t^k_x = \d^k_x \text{~~and~~} \x^{k+1}=\x^{k}-\alpha\d^k_x.
    \end{align}
    Therefore, we have
    \begin{align*}	&\sum_{i=1}^{n}\|t_{x,i}^{k+1}-\t_x^{k+1}\|^2
    = \sum_{i=1}^{n}\left\|\left(\sum\nolimits_{j=1}^{n}w_{ij}t_{x,j}^{k} + d_{x,i}^{k+1} - d_{x,i}^{k}- \t_x^{k}\right) + \big(\t_x^{k} - \t_x^{k+1}\big)\right\|^2\\
	=& \sum_{i=1}^{n}\left\|\sum\nolimits_{j=1}^{n}w_{ij}t_{x,j}^{k} + d_{x,i}^{k+1} - d_{x,i}^{k}- \t_x^{k}\right\|^2
 + 2\sum_{i=1}^{n}\left\langle
 \t_{x,i}^{k+1}- \t_x^{k}, \t_x^{k} - \t_x^{k+1}\right\rangle + \sum_{i=1}^{n}\|\t_x^{k} - \t_x^{k+1}\|^2   \\
	=&\sum_{i=1}^{n}\left\|\sum\nolimits_{j=1}^{n}w_{ij}t_{x,j}^{k} + d_{x,i}^{k+1} - d_{x,i}^{k}- \t_x^{k}\right\|^2 - n\|\d_x^{k} - \d_x^{k+1}\|^2\\
	\leq&\left(1+\frac{1-\rho}{\rho}\right)\sum_{i=1}^{n}\left\|\sum\nolimits_{j=1}^{n}w_{ij}t_{x,j}^{k} - \t_x^{k}\right\|^2 + \left(1+\frac{\rho}{1-\rho}\right)\sum_{i=1}^{n}\|d_{x,i}^{k+1} - d_{x,i}^{k}\|^2 - n\|\d_x^{k} - \d_x^{k+1}\|^2\\
	\leq & \, \rho \sum_{i=1}^{n}\|t_{x,i}^{k}-\t_x^{k}\|^2 + \frac{1}{1-\rho}\sum_{i=1}^{n}\|d_{x,i}^{k+1} - d_{x,i}^{k}\|^2,
	\end{align*}
	where the first inequality is due to Cauchy-Schwartz inequality, and the second inequality follows from Lemma \ref{con_sqr}. 
\end{proof}

\begin{remark}\label{tv_bound}
The following inequalities can be derived by following similar steps as in Lemma \ref{tx_bounded}: 
\begin{align}
\sum_{i=1}^{n}\|t_{v,i}^{k+1}-\t_v^{k+1}\|^2 &\leq \rho \sum_{i=1}^{n}\|t_{v,i}^{k}-\t_v^{k}\|^2 + \frac{1}{1-\rho}\sum_{i=1}^{n}\|d_{v,i}^{k+1} - d_{v,i}^{k}\|^2, \label{tv-bartv}\\
\sum_{i=1}^{n}\|t_{y,i}^{k+1}-\t_y^{k+1}\|^2 &\leq \rho \sum_{i=1}^{n}\|t_{y,i}^{k}-\t_y^{k}\|^2 + \frac{1}{1-\rho}\sum_{i=1}^{n}\|d_{y,i}^{k+1} - d_{y,i}^{k}\|^2. \label{ty-barty}
\end{align}
The details are omitted.
\end{remark}

\begin{lemma}\label{dk+1-dk}
The sequences  $\{d_{x,i}^{k}\}$, $\{d_{v,i}^{k}\}$ and $\{d_{y,i}^{k}\}$ generated by Algorithm \ref{alg:slDB} satisfy
	\begin{align}
	\sum_{i=1}^{n}\|d_{x,i}^{k}-d_{x,i}^{k-1}\|^2&\leq C_1 \sum_{i=1}^{n}\left(\|x_i^k-x_i^{k-1}\|^2+\|y_i^k-y_i^{k-1}\|^2+\|v_i^k-v_i^{k-1}\|^2\right),\label{dx}\\
	\sum_{i=1}^{n}\|d_{v,i}^{k}-d_{v,i}^{k-1}\|^2&\leq C_1 \sum_{i=1}^{n}\left(\|x_i^k-x_i^{k-1}\|^2+\|y_i^k-y_i^{k-1}\|^2+\|v_i^k-v_i^{k-1}\|^2\right),\label{dv}\\
	\sum_{i=1}^{n}\|d_{y,i}^{k}-d_{y,i}^{k-1}\|^2&\leq C_2  \sum_{i=1}^{n}\left(\|x_i^k-x_i^{k-1}\|^2+\|y_i^k-y_i^{k-1}\|^2\right)\label{dy},
	\end{align}
	where $C_1$ and $C_2$ are defined in \eqref{def-constants-proof}.
\end{lemma}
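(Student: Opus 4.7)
The plan is to treat the three inequalities separately, starting with the easiest, \eqref{dy}, and then noting that \eqref{dx} and \eqref{dv} share the same structure. For \eqref{dy}, the identity $d_{y,i}^k = \nabla_2 f_i(x_i^k, y_i^k)$ together with the $L_{f,1}$-Lipschitz continuity of $\nabla f_i$ from Assumption~\ref{Assump1}(c) immediately yields $\|d_{y,i}^k - d_{y,i}^{k-1}\|^2 \le L_{f,1}^2(\|x_i^k-x_i^{k-1}\|^2 + \|y_i^k-y_i^{k-1}\|^2)$, and summing over $i$ gives the claim with $C_2 = L_{f,1}^2$.

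For \eqref{dv} (and symmetrically \eqref{dx}), I would split the difference into three telescoping pieces. Writing $d_{v,i}^k - d_{v,i}^{k-1}$ as
\begin{align*}
\bigl[\nabla_2 F_i(x_i^k,y_i^k) - \nabla_2 F_i(x_i^{k-1},y_i^{k-1})\bigr]
&- \bigl[\nabla^2_{22} f_i(x_i^k,y_i^k) - \nabla^2_{22} f_i(x_i^{k-1},y_i^{k-1})\bigr] v_i^k \\
&- \nabla^2_{22} f_i(x_i^{k-1},y_i^{k-1}) (v_i^k - v_i^{k-1}),
\end{align*}
the first piece is controlled by $L_{F,1}$ and the joint displacement in $(x,y)$; the third by $L_{f,1}$ (the operator norm of the Hessian is bounded since $\nabla f_i$ is $L_{f,1}$-Lipschitz) and $\|v_i^k - v_i^{k-1}\|$. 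The crucial middle piece uses $L_{f,2}$-Lipschitzness of $\nabla^2 f_i$ together with the bound $\|v_i^k\| \le r_v$, which is where the projection step in \eqref{vupdate} and the initialization $\|v_i^{-1}\|\le r_v$ enter decisively. Applying the triangle inequality, then $(a+b+c)^2 \le 3(a^2+b^2+c^2)$, and collecting the coefficients produces the bound with constant $3\max\{(L_{F,1}+r_v L_{f,2})^2,L_{f,1}^2\} = C_1$ after summing over $i$. The argument for \eqref{dx} is identical with $\nabla_1 F_i$ and $\nabla^2_{12} f_i$ in place of $\nabla_2 F_i$ and $\nabla^2_{22} f_i$, and the same Lipschitz constants apply by Assumption~\ref{Assump1}.

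The only genuine obstacle is justifying the uniform bound $\|v_i^k\| \le r_v$ needed to control the middle piece. Fortunately this is built in: the initialization in BOX~\ref{box:initial} guarantees $\|v_i^{-1}\| \le r_v$ and $v_i^0 = v_i^{-1}$, while the update \eqref{vupdate} explicitly applies $\mathcal{P}_{r_v}$ at every step, so $\|v_i^k\| \le r_v$ for all $k \ge 0$ by induction. Without this projection, the middle piece could grow arbitrarily large and the whole gradient-tracking argument in Lemma~\ref{tx_bounded} and Remark~\ref{tv_bound} would fail, which explains why the projection is essential, as emphasized in Remark~\ref{remark:alg-sldbo}(\ref{re:proj}).
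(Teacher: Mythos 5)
Your proof is correct and follows essentially the same route as the paper: the same three-term telescoping decomposition of $d_{v,i}^k-d_{v,i}^{k-1}$ (and $d_{x,i}^k-d_{x,i}^{k-1}$), the same use of $(a+b+c)^2\le 3(a^2+b^2+c^2)$ with the Lipschitz constants from Assumption~\ref{Assump1}, the bound $\|v_i^k\|\le r_v$ from the projection to control the Hessian-difference term, and the direct Lipschitz argument for \eqref{dy}. Your explicit inductive justification of $\|v_i^k\|\le r_v$ is a welcome detail that the paper leaves implicit.
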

\begin{proof}
	By the definition of $d_{x,i}^k$ in \eqref{xd}, it follows that
	\begin{align*}
	&\sum_{i=1}^{n}\|d_{x,i}^{k}-d_{x,i}^{k-1}\|^2
%	=  \sum_{i=1}^{n} \|\nabla_1 F(x^k_i, y^k_i) - \nabla^2_{12} f_i (x^k_i, y^k_i) v^k_i - \nabla_1 F(x^{k-1}_i, y^{k-1}_i) + \nabla^2_{12} f_i (x^{k-1}_i, y^{k-1}_i) v^{k-1}_i\|^2\\
	 \leq 3\sum_{i=1}^{n}\|\nabla_1 F_i(x^k_i, y^k_i)-\nabla_1F_i(x^{k-1}_i, y^{k-1}_i)\|^2\\
	 &\ \ \quad  +3\sum_{i=1}^{n}\|(\nabla^2_{12} f_i (x^k_i, y^k_i) -\nabla^2_{12} f_i (x^{k-1}_i, y^{k-1}_i)) v^{k}_i\|^2 +3\sum_{i=1}^{n}\|\nabla^2_{12} f_i (x^{k-1}_i, y^{k-1}_i)(v^{k}_i-v^{k-1}_i)\|^2\\
	&\leq 3(L_{F,1}+r_{v}L_{f,2})^2\sum_{i=1}^{n}\|x^k_i-x^{k-1}_i\|^2 + 3(L_{F,1}+r_{v}L_{f,2})^2\sum_{i=1}^{n}\|y^k_i-y^{k-1}_i\|^2+3L_{f,1}^2\sum_{i=1}^{n}\|v^k_i-v^{k-1}_i\|^2\\
	 &\leq C_1 \sum_{i=1}^{n}\left(\|x_i^k-x_i^{k-1}\|^2+\|y_i^k-y_i^{k-1}\|^2+\|v_i^k-v_i^{k-1}\|^2\right).
	\end{align*}
 Similarly, by the definition of $d_{v,i}^k$ in \eqref{vd}, we can also derive
	\begin{align*}
	\sum_{i=1}^{n}\|d_{v,i}^{k}-d_{v,i}^{k-1}\|^2\leq C_1 \sum_{i=1}^{n}\left(\|x_i^k-x_i^{k-1}\|^2+\|y_i^k-y_i^{k-1}\|^2+\|v_i^k-v_i^{k-1}\|^2\right).
	\end{align*}
	Then by the definition of $d_{y,i}^k$ in \eqref{yd}, we obtain
	\begin{align*}
	&\sum_{i=1}^{n}\|d_{y,i}^{k}-d_{y,i}^{k-1}\|^2\leq \sum_{i=1}^{n}\|\nabla_2f(x_i^k,y_i^{k})-\nabla_2f(x_i^{k-1},y_i^{k-1})\|^2\\
	&\ \ \leq L_{f,1}^2\sum_{i=1}^{n}\|x_i^k-x_i^{k-1}\|^2 + L_{f,1}^2\sum_{i=1}^{n}\|y_i^k-y_i^{k-1}\|^2
	= C_2  \sum_{i=1}^{n}\left(\|x_i^k-x_i^{k-1}\|^2+\|y_i^k-y_i^{k-1}\|^2\right),
	\end{align*}
which completes the proof.
\end{proof}

\begin{lemma}\label{k+1-k}
The sequence $\{v_i^k\}$ generated by Algorithm \ref{alg:slDB} satisfies
\begin{align}\label{vk+1-vk}
	\sum_{i=1}^{n} \|v_i^{k+1} - v_{i}^{k}\|^2 \leq 8 \sum_{i=1}^{n}\|v_i^{k}-\v^{k}\|^2 + 4\eta^2 \sum_{i=1}^{n}\|t_{v,i}^{k}-\t_v^{k}\|^2 + 4n\eta^2\|\d_v^{k}\|^2.
\end{align}
\end{lemma}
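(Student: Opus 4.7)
The plan is to exploit non-expansiveness of the projection operator. Since $\|v_i^k\| \le r_v$ by construction we have $v_i^k = \mathcal{P}_{r_v}[v_i^k]$, so that
\[
\|v_i^{k+1} - v_i^k\| = \Bigl\|\mathcal{P}_{r_v}\Bigl[\textstyle\sum_{j} w_{ij}(v_j^k + \eta t_{v,j}^k)\Bigr] - \mathcal{P}_{r_v}[v_i^k]\Bigr\| \le \Bigl\|\textstyle\sum_{j} w_{ij}(v_j^k + \eta t_{v,j}^k) - v_i^k\Bigr\|.
\]
So it suffices to bound $\sum_i\|\sum_j w_{ij}(v_j^k + \eta t_{v,j}^k) - v_i^k\|^2$, which eliminates the projection from the analysis and reduces the problem to a purely linear/consensus estimate.

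Next, using $\sum_j w_{ij} = 1$, I would split the inner argument into four centered pieces:
\[
\textstyle\sum_{j} w_{ij}(v_j^k + \eta t_{v,j}^k) - v_i^k = \Bigl[\sum_{j} w_{ij}(v_j^k - \v^k)\Bigr] - (v_i^k - \v^k) + \eta \Bigl[\sum_{j} w_{ij}(t_{v,j}^k - \t_v^k)\Bigr] + \eta \t_v^k,
\]
and then apply the elementary inequality $\|a+b+c+d\|^2 \le 4(\|a\|^2+\|b\|^2+\|c\|^2+\|d\|^2)$. Summing over $i$ and applying Lemma \ref{con_sqr}(b) to the two mixed terms yields factors of $\rho^2 \le 1$ in front of $\sum_i\|v_i^k-\v^k\|^2$ and $\sum_i\|t_{v,i}^k-\t_v^k\|^2$, giving the coefficients $4\rho^2+4 \le 8$ and $4\eta^2\rho^2 \le 4\eta^2$ respectively.

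The final ingredient is recognizing the $\eta \t_v^k$ term. Averaging the $t$-update in \eqref{vupdate} gives $\t_v^k = \t_v^{k-1} + \d_v^k - \d_v^{k-1}$, and since $\t_v^{-1} = \d_v^{-1} = 0$ from the initialization BOX, a straightforward induction shows $\t_v^k = \d_v^k$ (this is the same identity used implicitly in \eqref{xiter}). Summing $\eta^2\|\t_v^k\|^2$ over $i$ thus contributes exactly $n\eta^2\|\d_v^k\|^2$, matching the third term in the statement.

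The main obstacle, if any, is simply the bookkeeping of choosing the right four-term decomposition so that Lemma \ref{con_sqr}(b) applies cleanly; conceptually there is no difficulty once one recognizes that the projection can be removed via non-expansiveness and that the tracking variable satisfies $\t_v^k = \d_v^k$. The remaining computation is a direct application of Cauchy–Schwarz and Lemma \ref{con_sqr}.
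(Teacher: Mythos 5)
Your proof is correct and follows essentially the same route as the paper: remove the projection via non-expansiveness (using that $v_i^k$ already lies in the ball of radius $r_v$), decompose the increment into the same four centered pieces, apply the four-term Cauchy--Schwarz inequality together with Lemma \ref{con_sqr}, and identify $\t_v^k=\d_v^k$ to produce the $4n\eta^2\|\d_v^k\|^2$ term. No gaps.
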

\begin{proof}
Follow the update of $ v_i^k $ in \eqref{vupdate}, we have
\begin{align*}
&\sum_{i=1}^{n} \|v_i^{k+1} - v_{i}^{k}\|^2 = \sum_{i=1}^{n} \| \mathcal{P}_{r_v}\left[\sum\nolimits_{j=1}^n w_{i j}( v_j^k+\eta  t_{v, j}^k)\right]- v_{i}^{k}\|^2
\leq \sum_{i=1}^{n} \| \sum\nolimits_{j=1}^n w_{i j}( v_j^k+\eta  t_{v, j}^k)- v_{i}^{k}\|^2 \\
&\leq  4\sum_{i=1}^{n} \| \sum\nolimits_{j=1}^n w_{i j} v_j^k-\v^k\|^2 + 4\sum_{i=1}^{n} \| \v^k - v_i^k\|^2 + 4\eta^2\sum_{i=1}^{n} \| \sum\nolimits_{j=1}^n w_{i j} t_{v,j}^k-\t_v^k\|^2 + 4\eta^2\sum_{i=1}^{n} \|\d_v^k\|^2\\
&\leq  8 \sum_{i=1}^{n}\|v_i^{k}-\v^{k}\|^2 + 4\eta^2 \sum_{i=1}^{n}\|t_{v,i}^{k}-\t_v^{k}\|^2 + 4\eta^2 \sum_{i=1}^{n}\|\d_v^{k}\|^2,
\end{align*}
where the first inequality holds because the projection operator is non-expansive, and the third inequality is due to Lemma \ref{con_sqr}.
\end{proof}

\begin{remark}
Similarily, we can deduce the following inequalities, details are omitted to avoid repetition.
\begin{align}
	\sum_{i=1}^{n} \|x_i^{k+1} - x_{i}^{k}\|^2 &\leq 8 \sum_{i=1}^{n}\|x_i^{k}-\x^{k}\|^2 + 4\alpha^2 \sum_{i=1}^{n}\|t_{x,i}^{k}-\t_x^{k}\|^2 + 4n\alpha^2\|\d_x^{k}\|^2, \label{xk+1-xk}\\
	\sum_{i=1}^{n} \|y_i^{k+1} - y_{i}^{k}\|^2 &\leq 8 \sum_{i=1}^{n}\|y_i^{k}-\y^{k}\|^2 + 4\beta^2 \sum_{i=1}^{n}\|t_{y,i}^{k}-\t_y^{k}\|^2 + 4n\beta^2\|\d_y^{k}\|^2. \label{yk+1-yk}
\end{align}
\end{remark}

\begin{lemma}
The sequences $\{d_{y,i}^k\},\,\{d_{v,i}^k\}$ generated by Algorithm \ref{alg:slDB} satisfy
\begin{align}
\|\d_y^{k}\|^2 &\leq \frac{2L_{f,1}^2}{n}\sum_{i=1}^{n}\left(\|x_i^k - \x^k\|^2 + \|y_i^k - \y^k\|^2\right) + 2L_{f,1}^2\|\y^k - y^*(\x^k)\|^2,\label{bardy}\\
\|\d_v^{k}\|^2 &\leq  \frac{5(L_{f,2}r_v+L_{F,1})^2}{n}\sum_{i=1}^n \left(\|x_i^k-\x^k\|^2+\|y_i^k-\y^k\|^2\right)
+ \frac{5L_{f,1}^2}{n}\sum_{i=1}^n \|v_i^k-\v^k\|^2 \nonumber\\
&\quad + 5 (L_{F,1}+r_v L_{f,2})^2 \|\y^k - y^*(\x^k)\|^2 + 5 L_{f,1}^2 \|\v^k - v^*(\x^k)\|^2.\label{bardv}
\end{align}
\end{lemma}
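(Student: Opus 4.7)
The plan is to exploit two vanishing identities. By optimality of $y^*(\x^k)$, we have $\frac{1}{n}\sum_i \nabla_2 f_i(\x^k, y^*(\x^k)) = \nabla_2 f(\x^k, y^*(\x^k)) = 0$; and by the definition $v^*(x) := [\nabla^2_{22} f(x,y^*(x))]^{-1}\nabla_2 F(x,y^*(x))$, we have $\frac{1}{n}\sum_i [\nabla_2 F_i(\x^k, y^*(\x^k)) - \nabla^2_{22} f_i(\x^k, y^*(\x^k)) v^*(\x^k)] = 0$. Subtracting these zeros from $\d_y^k$ and $\d_v^k$ respectively expresses each as an average of single-agent increments that I can telescope through convenient intermediate points and bound using Lipschitz continuity.

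For \eqref{bardy}, the per-agent increment after subtracting zero is $\nabla_2 f_i(x_i^k, y_i^k) - \nabla_2 f_i(\x^k, y^*(\x^k))$. I would split it at the midpoint $(\x^k, \y^k)$, bound each half by the $L_{f,1}$-Lipschitz continuity of $\nabla f_i$ (Assumption \ref{Assump1}(c)), apply $\|a+b\|^2 \le 2\|a\|^2 + 2\|b\|^2$ to the two pieces, and finally apply Jensen's inequality $\|\frac{1}{n}\sum_i A_i\|^2 \le \frac{1}{n}\sum_i \|A_i\|^2$ to the outer average. The two Lipschitz bounds $\|\nabla_2 f_i(x_i^k, y_i^k) - \nabla_2 f_i(\x^k, \y^k)\|^2 \le L_{f,1}^2(\|x_i^k-\x^k\|^2 + \|y_i^k-\y^k\|^2)$ and $\|\nabla_2 f_i(\x^k, \y^k) - \nabla_2 f_i(\x^k, y^*(\x^k))\|^2 \le L_{f,1}^2\|\y^k - y^*(\x^k)\|^2$ then produce exactly the $2L_{f,1}^2$ coefficients in the claim.

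For \eqref{bardv}, set $g_i(x,y,v) := \nabla_2 F_i(x,y) - \nabla^2_{22} f_i(x,y) v$, so the increment to bound is $g_i(x_i^k, y_i^k, v_i^k) - g_i(\x^k, y^*(\x^k), v^*(\x^k))$. I would telescope this through the four intermediate points $(\x^k, y_i^k, v_i^k)$, $(\x^k, \y^k, v_i^k)$, $(\x^k, \y^k, \v^k)$, $(\x^k, y^*(\x^k), \v^k)$, producing five pieces. The $x$- and $y$-type pieces are controlled by $(L_{F,1}+L_{f,2}r_v)$ times the relevant coordinate difference, using the $L_{F,1}$-Lipschitzness of $\nabla F_i$, the $L_{f,2}$-Lipschitzness of $\nabla^2 f_i$, and the uniform bound $r_v$ on every $v$-like quantity appearing inside a Hessian increment. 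The two remaining $v$-type pieces are controlled by $L_{f,1}$ via $\|\nabla^2_{22} f_i\|_{\mathrm{op}} \le L_{f,1}$. Combining via $\|\sum_{\ell=1}^5 B_\ell\|^2 \le 5\sum_\ell \|B_\ell\|^2$ on each summand and Jensen's inequality on the outer average produces exactly the factor $5$ in the displayed bound, with the five target terms matched one-to-one to the five telescope pieces.

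The main bookkeeping hurdle is marshalling the uniform bounds $\|v_i^k\| \le r_v$, $\|\v^k\| \le r_v$, and $\|v^*(\x^k)\| \le r_v$ wherever they are needed. The first holds by construction: every $v_i^k$ is the image of the projection $\mathcal{P}_{r_v}$ from the update \eqref{vupdate}, combined with the initialization $\|v_i^{-1}\| \le r_v$ from BOX \ref{box:initial}. The second follows from the first by the triangle inequality. The third follows from the closed form of $v^*$ together with Assumption \ref{Assump1}(a)--(b), which gives $\|v^*(x)\| \le \|[\nabla^2_{22} f(x,y^*(x))]^{-1}\|_{\mathrm{op}}\cdot\|\nabla_2 F(x,y^*(x))\| \le L_{F,0}/\sigma = r_v$ (and precisely this is why $r_v$ was defined as it is in \eqref{def-constants-main-text}). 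Once these three boundedness facts are in hand, the remaining manipulation is entirely routine.
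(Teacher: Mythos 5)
Your proposal is correct and follows essentially the same route as the paper: both exploit the vanishing of $\nabla_2 f(\x^k,y^*(\x^k))$ and of $\nabla_2 F(\x^k,y^*(\x^k))-\nabla^2_{22}f(\x^k,y^*(\x^k))v^*(\x^k)$, pass through the intermediate point $(\x^k,\y^k,\v^k)$, invoke $\|v_i^k\|\leq r_v$ and $\|\v^k\|\leq r_v$ from the projection step, and obtain the factors $2$ and $5$ from the same Cauchy--Schwarz counts. The only cosmetic difference is that you telescope per agent through a chain of intermediate points, whereas the paper groups the error into a global-consensus block $\Delta_1$ and an agent-consensus block $\Delta_2$ before squaring; the resulting five pieces and their coefficients are identical.
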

\begin{proof}
For $ \nabla_2 f(\x^k,y^*(\x^k))=0 $, by the definition of $ d_{y,i}^k $ in \eqref{yd}, we obtain
\begin{align*}
\|\d_y^{k}\|^2 &\leq \frac{2}{n}\sum_{i=1}^{n} \|\nabla_2 f_i(x_i^k,y_i^k) - \nabla_2 f_i(\x^k,\y^k)\|^2 + 2\|\nabla_2 f(\x^k,\y^k) - \nabla_2 f(\x^k,y^*(\x^k))\|^2\\
& \leq \frac{2L_{f,1}^2}{n}\sum_{i=1}^{n}\left(\|x_i^k - \x^k\|^2 + \|y_i^k - \y^k\|^2\right) + 2L_{f,1}^2\|\y^k - y^*(\x^k)\|^2.
\end{align*}
For convenience, we define
\begin{equation*}
\left\{
\begin{array}{l}
\Delta_1 := \left\|\nabla_2 F(\x^k,y^* (\x^k))-\nabla^2_{22}f (\x^k,y^* (\x^k)) v^* (\x^k)-\nabla_2 F(\x^k,\y^k)+\nabla^2_{22}f (\x^k,\y^k) \v^k\right\|, \smallskip \\
\Delta_2 := \left\|\nabla_2 F(\x^k,\y^k)-\nabla^2_{22}f(\x^k,\y^k) \v^k- \frac{1}{n}\sum_{i=1}^n\Big(\nabla_2 F_i(x^k_i, y^k_i) - \nabla^2_{22} f_i(x^k_i, y^k_i) v^k_i\Big)\right\|.
\end{array}
\right.
\end{equation*}
From the triangle inequality, it follows that
\begin{align}\label{dv1}
\Delta_1	\leq \, & \Big\|\nabla_2 F(\x^k, y^* (\x^k))-\nabla_2 F(\x^k,\y^{k})\Big\|
+\left\|\Big(\nabla^2_{22} f (\x^k,\y^k) - \nabla^2_{22} f (\x^k, y^* (\x^k))\Big)  \v^k\right\| \nonumber\\
\, & +\left\|\nabla^2_{22} f(\x^k, y^* (\x^k))\big(\v^{k}-v^* (\x^k)\big)\right\|\nonumber\\
\leq \, &
\left(L_{F,1} + L_{f,2} r_v\right) \|\y^k-y^* (\x^k)\|
+L_{f,1} \|\v^{k}-v^* (\x^k)\|.
\end{align}
By using the triangle inequality again and considering $F={1\over n}\sum_{i=1}^nF_i$ and $f={1\over n}\sum_{i=1}^nf_i$, we derive
\begin{align}\label{dv2}
\Delta_2	\leq \, & \frac{1}{n}\sum_{i=1}^n \left\|\nabla_2 F_i(\x^k,\y^k)-\nabla^2_{22}f_i(\x^k,\y^k) \v^k- \nabla_2 F_i(x^k_i, y^k_i) + \nabla^2_{22} f_i(x^k_i, y^k_i) v^k_i)\right\|\nonumber\\
\leq \, & \frac{1}{n} \sum_{i=1}^n\left\| \nabla_2 F_i(\x^k,\y^k)- \nabla_2 F_i(x^k_i, y^k_i)\right\| + \frac{1}{n} \sum_{i=1}^n\left\|\Big(\nabla^2_{22} f_i(\x^k,\y^k)-\nabla^2_{22} f_i(x_i^k, y_i^k)\Big)  v^k_i\right\| \nonumber\\
\, & + \frac{1}{n}\sum_{i=1}^n\Big\|\nabla^2_{22} f_i(\x^k,\y^{k})(\v^k-v_i^k)\Big\|\nonumber\\
\leq \, & \frac{L_{F,1}+ r_v L_{f,2}}{n}\sum_{i=1}^n \big(\|x_i^k-\x^k\|+ \|y_i^k-\y^k\|\big)
+ \frac{L_{f,1}}{n}\sum_{i=1}^n \|v_i^k-\v^k\|.
\end{align}
Then, for $ \nabla_2 F(\x^k,y^*(\x^k)) = \nabla^2_{22} f(\x^k,y^*(\x^k)) v^*(\x^k)$, combining \eqref{dv1} and \eqref{dv2}, using the inequality $\left(\sum_{l=1}^{s} a_l\right)^2\leq s\sum_{l=1}^{s} a_l^2$, we can deduce that
\begin{align*}
\|\d_v^{k}\|^2  
&\leq (\Delta_1 + \Delta_2)^2\leq  \frac{5(L_{f,2}r_v+L_{F,1})^2}{n}\sum_{i=1}^n \left(\|x_i^k-\x^k\|^2+\|y_i^k-\y^k\|^2\right)
\\
&+ \frac{5L_{f,1}^2}{n}\sum_{i=1}^n \|v_i^k-\v^k\|^2 + 5 (L_{f,2}r_v+L_{F,1})^2 \|\y^k - y^*(\x^k)\|^2 + 5 L_{f,1}^2 \|\v^k - v^*(\x^k)\|^2,
\end{align*}
which completes the proof.
\end{proof}

Now, combining \eqref{dx}-\eqref{dy}, \eqref{vk+1-vk}, \eqref{xk+1-xk}-\eqref{yk+1-yk}, \eqref{bardy}-\eqref{bardv} with Lemma \ref{tx_bounded} and Remark \ref{tv_bound}, we can establish the boundness of $ \frac{1}{n}\sum_{i=1}^{n}\|t_{v,i}^{k+1}-\t_v^{k+1}\|^2 $, $\frac{1}{n} \sum_{i=1}^{n}\|t_{y,i}^{k+1}-\t_y^{k+1}\|^2 $ and $\frac{1}{n} \sum_{i=1}^{n}\|t_{x,i}^{k+1}-\t_x^{k+1}\|^2 $:
\begin{align}
\frac{1}{n}&\sum_{i=1}^{n}\|t_{y,i}^{k+1}-\t_y^{k+1}\|^2 \leq (\rho+\frac{4C\beta^2}{1-\rho})\frac{1}{n}\sum_{i=1}^{n}\|t_{y,i}^{k}-\t_y^{k}\|^2+\frac{4C\alpha^2}{(1-\rho)n} \sum_{i=1}^{n}\|t_{x,i}^{k}-\t_x^{k}\|^2 \nonumber\\
&+ \frac{C(8+8L_{f,1}^2\beta^2)}{(1-\rho)n}\sum_{i=1}^{n}(\|x_i^k-\x^k\|^2+\|y_i^k-\y^k\|^2) + \frac{4C\alpha^2}{1-\rho}\|\d_x^k\|^2+\frac{8CL_{f,1}^2\beta^2}{1-\rho}\|\y^k-y^*(\x^k)\|^2,\label{ty}\\
\frac{1}{n}&\sum_{i=1}^{n}\|t_{x,i}^{k+1}-\t_x^{k+1}\|^2 \leq (\rho+\frac{4C\alpha^2}{1-\rho})\frac{1}{n}\sum_{i=1}^{n}\|t_{x,i}^{k}-\t_x^{k}\|^2+\frac{4C}{(1-\rho)n} \sum_{i=1}^{n}(\beta^2\|t_{y,i}^{k}-\t_y^{k}\|^2+\eta^2\|t_{v,i}^{k}-\t_v^{k}\|^2) \nonumber\\
&+\frac{C(8+8L_{f,1}^2\beta^2+20L_1^2\eta^2)}{(1-\rho)n}\sum_{i=1}^{n}(\|x_i^k-\x^k\|^2+\|y_i^k-\y^k\|^2) +\frac{C(8+20L_{f,1}^2\eta^2)}{(1-\rho)n}\sum_{i=1}^{n}\|v_i^k-\v^k\|^2\nonumber\\
&+\frac{4C\alpha^2}{1-\rho}\|\d_x^k\|^2+\frac{C(8L_{f,1}^2\beta^2+20L_1^2\eta^2)}{1-\rho}\|\y^k-y^*(\x^k)\|^2+\frac{20CL_{f,1}^2\eta^2}{1-\rho}\|\v^k-v^*(\x^k)\|^2,\label{tx}\\
\frac{1}{n}&\sum_{i=1}^{n}\|t_{v,i}^{k+1}-\t_v^{k+1}\|^2 \leq (\rho+\frac{4C\eta^2}{1-\rho})\frac{1}{n}\sum_{i=1}^{n}\|t_{v,i}^{k}-\t_v^{k}\|^2+\frac{4C}{(1-\rho)n} \sum_{i=1}^{n}(\beta^2\|t_{y,i}^{k}-\t_y^{k}\|^2+\alpha^2\|t_{x,i}^{k}-\t_x^{k}\|^2) \nonumber\\
&+\frac{C(8+8L_{f,1}^2\beta^2+20L_1^2\eta^2)}{(1-\rho)n}\sum_{i=1}^{n}(\|x_i^k-\x^k\|^2+\|y_i^k-\y^k\|^2) +\frac{C(8+20L_{f,1}^2\eta^2)}{(1-\rho)n}\sum_{i=1}^{n}\|v_i^k-\v^k\|^2\nonumber\\
&+\frac{4C\alpha^2}{1-\rho}\|\d_x^k\|^2+\frac{C(8L_{f,1}^2\beta^2+20L_1^2\eta^2)}{1-\rho}\|\y^k-y^*(\x^k)\|^2+\frac{20CL_{f,1}^2\eta^2}{1-\rho}\|\v^k-v^*(\x^k)\|^2,\label{tv}
\end{align}
where $C = \max \{C_1, C_2\}$ and $ L_1 $ is defined in \eqref{def-constants-main-text}.

\subsection{Convergence Rate of Algorithm \ref{alg:slDB}}
Before the final proof, we first establish several useful lemmas.
Recall that $\Phi(x) = F(x,y^*(x))$ denotes the overall objective function. %, we first establish the descent of $\Phi(\x^k)$.
\begin{lemma}\label{dlemma1a}
	The sequence $\{({x}_i^k, {y}_i^k,{v}_i^k)\}$ generated by Algorithm \ref{alg:slDB} satisfies
	\begin{align}
	\label{xf}
	 \Phi(\bar{x}^{k+1})-\Phi(\bar{x}^k)
	\leq & -\frac{\alpha}{2}\|\nabla \Phi (\bar{x}^k)\|^2
	-\frac{1}{2}\left(\frac{1}{\alpha}
	-L_{\Phi}\right)\|\bar{x}^{k+1}-\bar{x}^k\|^2 +\frac{5\alpha L_{f,1}^2}{2}\|\v^{k}-v^* (\x^k)\|^2 \nonumber\\
	&+\frac{5\alpha \left(L_{F,1} + L_{f,2}r_v\right)^2}{2}	\|\y^{k}-y^* (\x^k)\|^2
     +\frac{5\alpha L^2_{f,1}}{2n} \sum_{i=1}^n \|v_i^k-\v^k\|^2 \nonumber\\
	&+\frac{5\alpha\left(L_{F,1}+ r_vL_{f,2}\right)^2}{2n} \sum_{i=1}^n \left(\|x_i^k-\x^k\|^2+ \|y_i^k-\y^k\|^2\right),
	\end{align}
	where $L_{\Phi}$ is defined in \eqref{def-constants-main-text}. 
\end{lemma}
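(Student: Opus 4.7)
The plan is to use the $L_\Phi$-smoothness of $\Phi$, a standard consequence of Assumption \ref{Assump1} together with the implicit function theorem applied to the lower-level problem; the specific Lipschitz constant given in \eqref{def-constants-main-text} is exactly what this calculation produces (it is the same constant used in \cite{ghadimi2018approximation,ji2021bilevel}). Combining $L_\Phi$-smoothness with the identity $\bar{x}^{k+1} = \bar{x}^k - \alpha \bar{d}_x^k$ from \eqref{xiter} yields
$$\Phi(\bar{x}^{k+1}) - \Phi(\bar{x}^k) \leq \langle \nabla\Phi(\bar{x}^k), \bar{x}^{k+1} - \bar{x}^k\rangle + \frac{L_\Phi}{2}\|\bar{x}^{k+1} - \bar{x}^k\|^2.$$

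Next I would apply the polarization identity $-\alpha\langle a, b\rangle = \tfrac{\alpha}{2}\bigl(\|a-b\|^2 - \|a\|^2 - \|b\|^2\bigr)$ with $a = \nabla\Phi(\bar{x}^k)$ and $b = \bar{d}_x^k$ to rewrite the inner product. Using $\|\bar{x}^{k+1} - \bar{x}^k\|^2 = \alpha^2\|\bar{d}_x^k\|^2$ to convert the $-\tfrac{\alpha}{2}\|\bar{d}_x^k\|^2$ term into $-\tfrac{1}{2\alpha}\|\bar{x}^{k+1}-\bar{x}^k\|^2$, and then absorbing the $\tfrac{L_\Phi}{2}\|\bar{x}^{k+1}-\bar{x}^k\|^2$ contribution, this produces precisely the $-\tfrac{\alpha}{2}\|\nabla\Phi(\bar{x}^k)\|^2$ and $-\tfrac{1}{2}(\tfrac{1}{\alpha}-L_\Phi)\|\bar{x}^{k+1}-\bar{x}^k\|^2$ terms appearing in \eqref{xf}, plus a positive residual $\tfrac{\alpha}{2}\|\nabla\Phi(\bar{x}^k) - \bar{d}_x^k\|^2$ that remains to be controlled.

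The main work is bounding this residual. My plan is to decompose
$$\nabla\Phi(\bar{x}^k) - \bar{d}_x^k = T_1 + T_2 + T_3 + T_4 + T_5$$
by inserting and subtracting the intermediate quantities $\nabla_1 F(\bar{x}^k,\bar{y}^k)$, $\nabla^2_{12}f(\bar{x}^k,\bar{y}^k)\bar{v}^k$ and $\nabla^2_{12}f(\bar{x}^k, y^*(\bar{x}^k))\bar{v}^k$. The five pieces then capture, respectively: the effect of $\bar{y}^k$ versus $y^*(\bar{x}^k)$ grouped across the $\nabla_1 F$ and $\nabla^2_{12}f$ perturbations (yielding a norm bound of the form $(L_{F,1} + L_{f,2}r_v)\|\bar{y}^k - y^*(\bar{x}^k)\|$); the effect of $\bar{v}^k$ versus $v^*(\bar{x}^k)$ through $\nabla^2_{12}f(\bar{x}^k, y^*(\bar{x}^k))$ (giving $L_{f,1}\|\bar{v}^k - v^*(\bar{x}^k)\|$); and three consensus-error pieces comparing $\tfrac{1}{n}\sum_i \nabla_1 F_i(x_i^k, y_i^k)$ and $\tfrac{1}{n}\sum_i \nabla^2_{12}f_i(x_i^k, y_i^k) v_i^k$ to their counterparts evaluated at $(\bar{x}^k, \bar{y}^k, \bar{v}^k)$, producing the $x$-, $y$- and $v$-consensus bounds with constants $(L_{F,1} + r_v L_{f,2})$, $(L_{F,1}+r_v L_{f,2})$ and $L_{f,1}$ respectively. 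Each individual estimate uses only the Lipschitz bounds from Assumption \ref{Assump1}, the boundedness $\|v_i^k\|, \|\bar{v}^k\| \leq r_v$ enforced by the projection in \eqref{vupdate}, and the a priori bound $\|v^*(\bar{x}^k)\| \leq L_{F,0}/\sigma = r_v$ that follows from Assumption \ref{Assump1}(a)--(b). Finally, applying $\|T_1 + \cdots + T_5\|^2 \leq 5\sum_{l=1}^{5}\|T_l\|^2$ produces the factor of $5$ and the exact coefficients in \eqref{xf}.

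The main obstacle is choosing the five-term splitting so that the grouped constants match exactly the form $(L_{F,1}+L_{f,2}r_v)^2$ rather than the looser $L_{F,1}^2 + L_{f,2}^2 r_v^2$ one would get from a naive six-term split; grouping the two $\bar{y}^k$-vs-$y^*(\bar{x}^k)$ contributions before taking the norm, and similarly grouping the $\nabla_1 F_i$ and $\nabla^2_{12}f_i \bar{v}$ contributions per agent $i$ before averaging, is what makes the right-hand side of \eqref{xf} come out with the stated constants. Once the decomposition is chosen correctly, the remaining estimates are routine applications of the triangle inequality and Cauchy--Schwarz.
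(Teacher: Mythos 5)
Your proposal is correct and follows essentially the same route as the paper: the descent lemma with the $L_\Phi$-smoothness of $\Phi$ from \cite{ghadimi2018approximation}, the polarization identity to isolate $\frac{\alpha}{2}\|\nabla\Phi(\x^k)-\d_x^k\|^2$, and a five-term splitting of that residual (the paper organizes it as two intermediate errors $\Delta_3,\Delta_4$ before expanding to five pieces, but the grouping of the $\y^k$-vs-$y^*(\x^k)$ contributions into the single constant $L_{F,1}+L_{f,2}r_v$ and the use of $(\sum_{l=1}^{5}a_l)^2\leq 5\sum_{l=1}^{5}a_l^2$ are identical). No gaps.
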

\begin{proof}
	It follows from  \cite[Lemma 2.2]{ghadimi2018approximation} that $\nabla \Phi (x)$ is $L_{\Phi}$-Lipschitz continuous.
On the other hand, from \cite[Lemma 5.7]{beck2017first} we derive
	\begin{align}\label{phi0}
\Phi(\x^{k+1})-\Phi(\x^k)	%= \, &F(\x^{k+1},y^*(\x^{k+1}))-F(\x^{k},y^*(\x^{k})) \nonumber\\
	\leq \, & \big\langle \nabla \Phi(\x^k), \x^{k+1}-\x^k \big\rangle+\frac{L_{\Phi}}{2}\|\x^{k+1}-\x^k\|^2\nonumber\\
	\stackrel{\eqref{xiter}}= \, & -\alpha \big\langle \nabla \Phi (\x^k), \d^k_x \big\rangle +\frac{L_{\Phi}}{2}
	\|\x^{k+1}-\x^k\|^2\nonumber\\
	= \, & -\frac{\alpha}{2}\|\nabla \Phi(\x^k)\|^2
	-\frac{\alpha}{2}
	\|\d^k_x\|^2 +\frac{\alpha}{2}\|\nabla \Phi (\x^k)-\d^k_x\|^2
	+\frac{L_{\Phi}}{2}\|\x^{k+1}-\x^k\|^2 \\
     \stackrel{\eqref{xiter}}=\, & -\frac{\alpha}{2}\|\nabla \Phi(\x^k)\|^2
	-\frac{1}{2\alpha} 	\|\x^{k+1}-\x^k\|^2 +\frac{\alpha}{2}\|\nabla \Phi (\x^k)-\d^k_x\|^2
	+\frac{L_{\Phi}}{2}\|\x^{k+1}-\x^k\|^2. \nonumber
	\end{align}
	By   definition, we have
$\nabla \Phi (\x^k)=\nabla_1 F(\x^k,y^*(\x^k))-\nabla^2_{12}f(\x^k,y^*(\x^k)) v^*(\x^k)$ and
	\begin{align*}
\d^k_x = \frac{1}{n}\sum_{i=1}^n\Big(\nabla_1 F_i(x^k_i, y^k_i) - \nabla^2_{12} f_i(x^k_i, y^k_i) v^k_i\Big).
	\end{align*}
For convenience, we define
\begin{equation*}
\left\{
\begin{array}{l}
\Delta_3 := \left\|\nabla_1 F(\x^k,y^* (\x^k))-\nabla^2_{12}f (\x^k,y^* (\x^k)) v^* (\x^k)-\nabla_1 F(\x^k,\y^k)+\nabla^2_{12}f (\x^k,\y^k) \v^k\right\|, \smallskip \\
\Delta_4 := \left\|\nabla_1 F(\x^k,\y^k)-\nabla^2_{12}f(\x^k,\y^k) \v^k- \frac{1}{n}\sum_{i=1}^n\Big(\nabla_1 F_i(x^k_i, y^k_i) - \nabla^2_{12} f_i(x^k_i, y^k_i) v^k_i\Big)\right\|.
\end{array}
\right.
\end{equation*}
From the triangle inequality, following the similar steps in \eqref{dv1} and \eqref{dv2} respectively, we have
	\begin{align}\label{phi2}
\Delta_3
	\leq \, 
	\left(L_{F,1} + L_{f,2} r_v\right) \|\y^k-y^* (\x^k)\|
	+L_{f,1} \|\v^{k}-v^* (\x^k)\|,
	\end{align}
and 
	\begin{align}\label{phi3}
\Delta_4
	\leq \,  \frac{L_{F,1}+ r_v L_{f,2}}{n}\sum_{i=1}^n \big(\|x_i^k-\x^k\|+ \|y_i^k-\y^k\|\big)
              + \frac{L_{f,1}}{n}\sum_{i=1}^n \|v_i^k-\v^k\|.
	\end{align}
It is easy to observe from the triangle inequality that $\|\nabla \Phi(\x^k)-\d^k_x\| \leq \Delta_3 + \Delta_4$.
Then, by using \eqref{phi2}-\eqref{phi3}, it is easy to derive 
 \begin{align*}
\left\|\nabla \Phi(\x^k)-\d^k_x\right\|^2
     \leq  &\,  5\left(L_{F,1} + L_{f,2} r_v\right)^2 \|\y^k-y^* (\x^k)\|^2 + 5L_{f,1}^2 \|\v^{k}-v^* (\x^k)\|^2 \\
&+\frac{5\left(L_{F,1}+ r_v L_{f,2}\right)^2}{n}\sum_{i=1}^n\left( \|x_i^k-\x^k\|^2+ \|y_i^k-\y^k\|^2\right)
+ \frac{5L_{f,1}^2}{n}\sum_{i=1}^n \|v_i^k-\v^k\|^2,
 \end{align*}
 which, together with \eqref{phi0}, yields the desired result.
\end{proof}

The next two lemmas bound $\Vert\y^{k+1}-y^*(\x^k)\Vert$ and $\Vert \v^{k+1} - v^* (\x^k)\Vert$, respectively.
\begin{lemma}\label{lemmay}
	The sequence $\{(x^k_i,y^k_i,v^k_i)\}$ generated by Algorithm \ref{alg:slDB} satisfies
	\begin{align}\label{y*}
	\Vert\y^{k+1}-y^*(\x^k)\Vert^2&\leq\Big(1-\frac{\beta \sigma}{2}\Big)\|\y^k-y^*(\x^k)\|^2
+\frac{3\beta L_{f,1}^2}{n  \sigma} \sum_{i=1}^n\left(\|\x^k-x^k_i\|^2+\|\y^k-y^k_i\|^2\right).
	\end{align}
\end{lemma}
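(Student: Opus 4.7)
\textbf{Proof proposal for Lemma \ref{lemmay}.} The plan is to combine three ingredients: the gradient-tracking identity that makes the averaged $y$-update look like a plain gradient step on the \emph{global} objective, the standard strong-convexity contraction for gradient descent on the lower-level problem, and a Lipschitz bound on the gap between the true global gradient and the local-gradient average $\d_y^k$.

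First, I would observe that Algorithm \ref{alg:slDB} satisfies, by the same telescoping-and-averaging argument used to derive \eqref{xiter} for the $x$-block, the identities $\t_y^k=\d_y^k$ and
\[
\y^{k+1}=\y^k-\beta\,\d_y^k.
\]
This reduces the lemma to analyzing one step of (inexact) gradient descent applied to the strongly convex function $f(\x^k,\cdot)$ at the point $\y^k$, where the inexactness is measured by
\[
e^k := \d_y^k-\nabla_2 f(\x^k,\y^k)=\frac{1}{n}\sum_{i=1}^n\bigl(\nabla_2 f_i(x_i^k,y_i^k)-\nabla_2 f_i(\x^k,\y^k)\bigr).
\]

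Next, I would decompose
\[
\y^{k+1}-y^*(\x^k)=\bigl[\y^k-\beta\nabla_2 f(\x^k,\y^k)-y^*(\x^k)\bigr]+\beta\,e^k,
\]
and apply Young's inequality $\|a+b\|^2\le (1+c)\|a\|^2+(1+1/c)\|b\|^2$ with the choice $c=\beta\sigma/2$. For the first summand, because $f(\x^k,\cdot)$ is $\sigma$-strongly convex and $L_{f,1}$-smooth, the standard gradient-descent contraction gives, for a sufficiently small step size $\beta\le 2/(\sigma+L_{f,1})$ (which is guaranteed by the stepsize conditions in \eqref{eq:stepsize}),
\[
\bigl\|\y^k-\beta\nabla_2 f(\x^k,\y^k)-y^*(\x^k)\bigr\|^2\le (1-\beta\sigma)\|\y^k-y^*(\x^k)\|^2.
\]
Multiplying by $(1+\beta\sigma/2)$ and using $(1+\beta\sigma/2)(1-\beta\sigma)\le 1-\beta\sigma/2$ produces the leading contraction factor in the target inequality.

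For the error term, the coefficient becomes $(1+2/(\beta\sigma))\beta^2\le 3\beta/\sigma$ provided $\beta\sigma\le 1$. It remains to estimate $\|e^k\|^2$. By the convexity of $\|\cdot\|^2$ and the joint Lipschitz continuity of each $\nabla_2 f_i$ with constant $L_{f,1}$,
\[
\|e^k\|^2\le \frac{1}{n}\sum_{i=1}^n\|\nabla_2 f_i(x_i^k,y_i^k)-\nabla_2 f_i(\x^k,\y^k)\|^2\le \frac{L_{f,1}^2}{n}\sum_{i=1}^n\bigl(\|x_i^k-\x^k\|^2+\|y_i^k-\y^k\|^2\bigr).
\]
Combining the two estimates yields exactly \eqref{y*}. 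The main (mild) obstacle is bookkeeping the stepsize conditions so that $\beta\sigma\le 1$ and $\beta\le 2/(\sigma+L_{f,1})$ both hold; these are both absorbed into the general stepsize restriction \eqref{eq:stepsize} of the theorem, so no additional assumption is required.
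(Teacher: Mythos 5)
Your proposal is correct and follows essentially the same route as the paper's proof: the same decomposition $\y^{k+1}-y^*(\x^k)=[\y^k-\beta\nabla_2 f(\x^k,\y^k)-y^*(\x^k)]+\beta[\d_y^k-\nabla_2 f(\x^k,\y^k)]$, Young's inequality with parameter $\beta\sigma/2$, the standard strong-convexity/smoothness contraction under $\beta\le 2/(\sigma+L_{f,1})$, and the same Lipschitz bound on the consensus-induced error. The only cosmetic difference is that you state the identity $\y^{k+1}=\y^k-\beta\,\d_y^k$ (via $\t_y^k=\d_y^k$) explicitly up front, which the paper uses implicitly.
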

\begin{proof}
	First, by Cauchy-Schwartz inequality, for any $\xi>0$, we have
	\begin{align}\label{y2p}
\|\y^{k+1} \, - \, & y^*(\x^k)\|^2 =    \|\big[\y^k-\beta \nabla_2f(\x^k,\y^k)-y^*(\x^k)\big] +\beta\big[ \nabla_2f(\x^k,\y^k)- \d_y^k\big] \|^2 \nonumber\\
	\leq \, &(1+\xi) \|\y^k-\beta \nabla_2f(\x^k,\y^k)-y^*(\x^k) \|^2+(1+1/\xi)\beta ^2 \|\nabla_2f(\x^k,\y^k)-\d_y^k \|^2.
	\end{align}
Note that $ \nabla_2f(\x^k,y^*(\x^k))=0$. It follows from the $\sigma$-strong convexity of $f(\bar{x}^k,\cdot)$, $L_{f,1}$-smoothness of $f$ and \cite[Theorem 2.1.12]{nesterov2018lectures}  that
	\begin{align}
	 \langle\y^k-y^*(\x^k),\nabla_2f(\x^k,\y^k) \rangle &=
\langle\y^k-y^*(\x^k),\nabla_2f(\x^k,\y^k)-\nabla_2f(\x^k,y^*(\x^k)) \rangle \nonumber \\
	&\geq \frac{\sigma L_{f,1}}{\sigma+L_{f,1}}\|\y^k-y^*(\x^k)\|^2+\frac{1}{\sigma+L_{f,1}}\|\nabla_2f(\x^k,\y^k)\|^2. \label{jy-03}
	\end{align}
By expanding $\Vert\y^k-\beta \nabla_2f(\x^k,\y^k)-y^*(\x^k)\Vert^2$, plugging in \eqref{jy-03}, and noting that $\beta \leq\frac{2}{\sigma+L_{f,1}}$ in Algorithm \ref{alg:slDB}, we obtain
	\begin{align}\label{y1}
	 \Vert\y^k-\beta \nabla_2f(\x^k,\y^k)-y^*(\x^k)\Vert^2
     \leq  \Big(1- \frac{2\beta \sigma L_{f,1}}{\sigma+L_{f,1}}\Big)\|\y^k-y^*(\x^k)\|^2\leq (1-\beta \sigma)\|\y^k-y^*(\x^k)\|^2.
	\end{align}
It is elementary to show from $f={1\over n}\sum_{i=1}^{n}f_i$, the definition of $\d_y^k$ in \eqref{def:bar9}, the triangle inequality and Assumption \ref{Assump1} (c) that	
\begin{align}\label{y2}
	\Vert\nabla_2f(\x^k,\y^k)-\d_y^k\Vert^2 
	  \leq \frac{L_{f,1}^2}{n}\sum_{i=1}^n\Big(\|\x^k-x^k_i\|^2+\|\y^k-y^k_i\|^2\Big),
\end{align}
	which, together with \eqref{y2p}, \eqref{y1} and the relation $\d^k_{y} = \t^k_{y}$,  yields 
	\begin{align*}
	\Vert \y^{k+1} - \,  y^*(\x^k)\Vert^2  &\leq(1+\xi)(1-\beta\sigma)\|\y^k-y^*(\x^k)\|^2+(1+1/\xi)\frac{\beta^2L_{f,1}^2}{n}\sum_{i=1}^n\big(\|\x^k-x^k_i\|^2+\|\y^k-y^k_i\|^2\big).
	\end{align*}
Finally, we arrive at the desired result \eqref{y*} by taking $\xi =  \beta\sigma/2$ and using $\beta\sigma\leq 1$.
\end{proof}

\begin{lemma}\label{lemmav}
	The sequence  $\{(x^k_i,y^k_i,v^k_i)\}$ generated by Algorithm \ref{alg:slDB} satisfies
\begin{align}\label{v^*}
&\Vert \v^{k+1} - v^* (\x^k)\Vert^2
\leq\left(1- {\eta  \sigma}/{2}\right) \left\|\v^k-v^* (\x^k)\right\|^2+\frac{3\eta \left(L_{F,1}+L_{f,2} r_v\right)^2}{ \sigma}
\left\| \y^k-y^* (\x^k) \right\|^2\nonumber\\
& \ \ +\frac{9\eta\left(L_{F,1}+L_{f,2} r_v\right)^2}{n\sigma} \sum_{i=1}^n \left(\|x_i^k-\x^k\|^2+ \|y_i^k-\y^k\|^2\right)+
\frac{2\eta^2\rho^2}{n}\sum_{i = 1}^{n}\| t_{v,i}^k-\t_v^k\|^2\nonumber\\
&\ \ +\left( {9\eta L_{f,1}^2}/{\sigma} + 2\rho^2\right)\frac{1}{n}\sum_{i=1}^n \|v_i^k-\v^k\|^2.
\end{align}
\end{lemma}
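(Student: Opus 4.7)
My plan is to combine a Jensen/non-expansiveness step that removes the projection, an exact variance/bias decomposition, and a strongly-convex contraction analysis on the averaged iterate. First, I would observe that $\|v^*(\x^k)\|\leq L_{F,0}/\sigma = r_v$ (Assumption \ref{Assump1}: $[\nabla^2_{22} f]^{-1}$ has operator norm at most $1/\sigma$ and $\|\nabla_2 F_i\|\leq L_{F,0}$), so $v^*(\x^k)$ is a fixed point of $\mathcal{P}_{r_v}$. Setting $u_i := \sum_j w_{ij}(v_j^k + \eta t_{v,j}^k)$, Jensen's inequality and non-expansiveness of the projection give $\|\v^{k+1} - v^*(\x^k)\|^2 \leq \frac{1}{n}\sum_i \|u_i - v^*(\x^k)\|^2$. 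Since $W$ is doubly stochastic, $\bar{u} := \frac{1}{n}\sum_i u_i = \v^k + \eta \t_v^k$, so the cross terms in the expansion vanish and the exact identity
\[
\frac{1}{n}\sum_{i=1}^n \|u_i - v^*(\x^k)\|^2 = \frac{1}{n}\sum_{i=1}^n \|u_i - \bar{u}\|^2 + \|\bar{u} - v^*(\x^k)\|^2
\]
holds. Applying Lemma \ref{con_sqr}(b) to the consensus term together with $(a+b)^2\leq 2a^2+2b^2$ produces the $\frac{2\rho^2}{n}\sum_i \|v_i^k-\v^k\|^2$ and $\frac{2\eta^2\rho^2}{n}\sum_i \|t_{v,i}^k-\t_v^k\|^2$ terms in the target with matching constants.

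It then remains to analyze $\|\bar{u} - v^*(\x^k)\|^2 = \|\v^k + \eta\d_v^k - v^*(\x^k)\|^2$, using the gradient-tracking identity $\t_v^k = \d_v^k$ (by the same induction from $\t_v^{-1}=\d_v^{-1}=0$ used for \eqref{xiter}). I would introduce the ``ideal'' direction $g_k := \nabla_2 F(\x^k, \y^k) - \nabla^2_{22} f(\x^k, \y^k) \v^k$ and split $\d_v^k = g_k + (\d_v^k - g_k)$, where $\|\d_v^k-g_k\|$ is exactly bounded in \eqref{dv2}. A Young-type split $\|a+b\|^2\leq (1+\xi)\|a\|^2 + (1+1/\xi)\|b\|^2$ with $\xi = \eta\sigma/2$ separates these two pieces. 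For the clean part, the optimality identity $\nabla_2 F(\x^k, y^*(\x^k)) = \nabla^2_{22} f(\x^k, y^*(\x^k)) v^*(\x^k)$ together with Lipschitzness of $\nabla_2 F$ and $\nabla^2_{22} f$ yields $g_k = -\nabla^2_{22} f(\x^k,\y^k)(\v^k - v^*(\x^k)) + E_k$ with $\|E_k\|\leq (L_{F,1}+L_{f,2}r_v)\|\y^k - y^*(\x^k)\|$. Expanding $\|(\v^k - v^*(\x^k)) + \eta g_k\|^2$ as $\|\v^k-v^*(\x^k)\|^2 + 2\eta\langle g_k, \v^k-v^*(\x^k)\rangle + \eta^2\|g_k\|^2$, using $\sigma I \preceq \nabla^2_{22}f \preceq L_{f,1} I$ and Young's inequality on the $\langle E_k,\v^k-v^*(\x^k)\rangle$ cross term, together with the step-size bound in \eqref{eq:stepsize} (to absorb the $O(\eta^2 L_{f,1}^2)$ remainder into $\eta\sigma/2$), delivers the contraction factor $1-\eta\sigma/2$ on $\|\v^k-v^*(\x^k)\|^2$. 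Squaring the bound in \eqref{dv2} via $(a+b+c)^2\leq 3(a^2+b^2+c^2)$ then converts the $(1+1/\xi)\eta^2\|\d_v^k-g_k\|^2$ contribution into the $\frac{9\eta(L_{F,1}+L_{f,2}r_v)^2}{n\sigma}$- and $\frac{9\eta L_{f,1}^2}{n\sigma}$-weighted consensus sums, while the $\|E_k\|^2$ term yields the $\frac{3\eta(L_{F,1}+L_{f,2}r_v)^2}{\sigma}\|\y^k-y^*(\x^k)\|^2$ contribution.

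The main obstacle is the constant tracking in the last step: the interaction between the outer Young's split (contributing $1+1/\xi = 1+2/(\eta\sigma)$) and the inner expansion (contributing $(1-\eta\sigma)^2$ plus $O(\eta^2)$ remainders) must balance so that the leading coefficient sharpens to \emph{exactly} $1-\eta\sigma/2$ and the error weights come out as stated; this is precisely what pins down the step-size bounds in \eqref{eq:stepsize}. The projection step is essential here because, without the ball-boundedness of $\v^k$, the residual $\d_v^k-g_k$ could not be controlled purely by consensus errors through \eqref{dv2}, and a heterogeneity-type assumption would be unavoidable.
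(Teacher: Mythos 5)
Your proposal is correct and follows essentially the same route as the paper's proof: non-expansiveness of $\mathcal{P}_{r_v}$ about $v^*(\x^k)$ plus Jensen, the vanishing-cross-term decomposition into a consensus part (handled by Lemma \ref{con_sqr}) and the averaged recursion $\|\v^k+\eta\d_v^k-v^*(\x^k)\|^2$, an outer Young split with parameter $\eta\sigma/2$ against the bound \eqref{dv2}, and a strong-convexity contraction on the clean direction. The only cosmetic difference is that you expand the square and use $\sigma I\preceq\nabla^2_{22}f\preceq L_{f,1}I$ on the cross term, whereas the paper factors out $I-\eta\nabla^2_{22}f(\x^k,\y^k)$ and bounds its operator norm by $1-\eta\sigma$; these are equivalent, and your explicit remark that $\|v^*(\x^k)\|\leq L_{F,0}/\sigma=r_v$ justifies a step the paper leaves implicit.
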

\begin{proof}
For convenience, we define $ \Delta_5 := \v^k+\eta \left[\nabla_2 F(\x^k,\y^{k})- \nabla^2_{22} f(\x^k,\y^{k})\v^k\right]-v^*(\x^k). $ 
Similar to \eqref{y2p}, for any $\delta>0$, we have
	\begin{align}\label{v2p}
\|\v^k+\eta \d_v^k-v^*(\x^k)\|^2 \leq (1+\delta)\left\|\Delta_5\right\|^2 +(1+1/\delta)\eta ^2\Delta_2^2.
	\end{align}
Recall that the term $ \Delta_2 $ is treated in \eqref{dv2}. Next, we treat the term $\|\Delta_5\|^2$ in \eqref{v2p}.
Since $\nabla^2_{22} f(\x^k,y^* (\x^k))v^* (\x^k)=\nabla_2 F(\x^k,y^* (\x^k))$, we have the following reformulation:
$\Delta_5 = \Delta_{5,1} -\eta \big(\Delta_{5,2} + \Delta_{5,3}\big)$, where
\begin{equation*}
\left\{
\begin{array}{l}
\Delta_{5,1} := \left[I - \eta\nabla^2_{22} f(\x^k,\y^k)\right]\Big(\v^k-v^* (\x^k)\Big), \smallskip \\
\Delta_{5,2} := \left[\nabla^2_{22} f(\x^k,\y^k)-\nabla^2_{22} f(\x^k,y^* (\x^k))\right] v^* (\x^k), \smallskip \\
\Delta_{5,3} := \nabla_2 F(\x^k,y^* (\x^k))-\nabla_2 F(\x^k,\y^k).
\end{array}
\right.
\end{equation*}
By Cauchy-Schwartz inequality, for any $\delta_1>0$, we have
	\begin{align*}
	 \|\Delta_5\|^2 	\leq
	\left(1+\delta_1\right) \| \Delta_{5,1} \|^2 + \left(1+1/{\delta_1}\right) \eta^2 \| \Delta_{5,2} + \Delta_{5,3} \|^2 .
	\end{align*}	
Since $\eta\leq \bar{\eta}\leq 1/L_{f,1}$ and $f (x,\cdot)$ is $\sigma$-strongly convex, there holds
	\begin{align*}
	\left\| \Delta_{5,1}\right\|
	\leq   \left\| I-\eta \nabla^2_{22} f (\x^k,\y^k)\right\|_{\textrm{op}}\left\|\v^k-v^* (\x^k)\right\|
    \leq   \left(1-\eta \sigma\right) \left\|\v^k-v^* (\x^k)\right\|.
	\end{align*}
Furthermore, it is apparent from Assumption \ref{Assump1} that
$\left\|\Delta_{5,2}+\Delta_{5,3}\right\| \leq   \left(L_{f,2}r_v + L_{F,1}\right) 	\| \y^k-y^* (\x^k) \|$.
Taking $\delta_1=\eta \sigma$ and noting $\eta\sigma\leq\eta L_{f,1}\leq 1$, we obtain
	\begin{align}\label{vt1}
\left\|\Delta_5\right\|^2
	\leq &
	\left(1+\eta \sigma\right)\left(1-\eta \sigma\right)^2 \left\|\v^k-v^* (\x^k)\right\|^2+\left(1+1/{\eta \sigma}\right) \eta^2\left(L_{f,2}r_v + L_{F,1}\right)^2
	\left\| \y^k-y^* (\x^k) \right\|^2\nonumber\\
	\leq & \left(1-\eta  \sigma\right) \left\|\v^k-v^* (\x^k)\right\|^2+\frac{2\eta \left(L_{f,2}r_v + L_{F,1}\right)^2}{ \sigma}
	\left\| \y^k-y^* (\x^k) \right\|^2.
	\end{align}
We next evaluate $\sum_{i=1}^{n}\Vert v^{k+1}_i - v^* (\x^k)\Vert^2$  to bound $\Vert \v^{k+1} - v^* (\x^k)\Vert^2$.
	By the update of $v^k_i$ in Algorithm \ref{alg:slDB}, we have
	\begin{align*}
	  \sum_{i=1}^{n}\Vert v^{k+1}_i - \, & v^* (\x^k)\Vert^2
    = \sum_{i=1}^{n}\left\| \mathcal{P}_{r_v}\left[\sum\nolimits_{j=1}^{n}w_{ij}\big(v_j^k + \eta t_{v,j}^k\big)\right]- v^* (\x^k)\right\|^2 \\
	\leq\, & \sum_{i=1}^{n}\left\| \sum\nolimits_{j=1}^{n}w_{ij}\big(v_j^k + \eta t_{v,j}^k\big)- v^* (\x^k)\right\|^2\\
	=\, &\sum_{i = 1}^{n}\left\| \sum\nolimits_{j=1}^{n}w_{ij}v_j^k -\v^k+ \eta \Big( \sum\nolimits_{j=1}^{n}w_{ij}t_{v,j}^k-\t_v^k\Big)\right\|^2 + \sum_{i = 1}^{n}\Vert \v^k - v^* (\x^k) + \eta \t_v^k \Vert^2\\
	\leq\, &\sum_{i = 1}^{n}\Vert \v^k - v^* (\x^k) + \eta \t_v^k \Vert^2+2\sum_{i = 1}^{n}\left\| \sum\nolimits_{j=1}^{n}w_{ij}v_j^k -\v^k\right\|^2 + 2\eta ^2\sum_{i = 1}^{n}\left\| \sum\nolimits_{j=1}^{n}w_{ij}t_{v,j}^k-\t_v^k\right\|^2\\
	\leq\, &\sum_{i = 1}^{n}\Vert \v^k - v^* (\x^k) + \eta \t_v^k \Vert^2+2\rho^2\sum_{i=1}^{n}\|v_i^k-\v^k\|^2 + 2\eta ^2\rho^2\sum_{i = 1}^{n}\| t_{v,i}^k-\t_v^k\|^2,
	\end{align*}
	where the second equality holds because	$\sum_{i = 1}^{n}\left[\sum_{j=1}^{n}w_{ij}v_j^k -\v^k+ \eta ( \sum_{j=1}^{n}w_{ij}t_{v,j}^k-\t_v^k)\right]=0$, and the last inequality follows from Lemma \ref{con_sqr}.
Note that $\d^k_{v} = \t^k_{v}$. Then, by combining \eqref{v2p}, \eqref{vt1} and \eqref{dv2}, it can be concluded that
	\begin{align*}
	\Vert \v^{k+1} -\, & v^* (\x^k)\Vert^2 \leq \frac{1}{n}\sum_{i=1}^{n}\Vert v^{k+1}_i - v^* (\x^k)\Vert^2 \\
	&\leq \Vert \v^k - v^* (\x^k) + \eta \t_v^k \Vert^2 + \frac{2\rho^2}{n}\sum_{i=1}^{n}\|v_i^k-\v^k\|^2 + \frac{2\eta^2\rho^2}{n}\sum_{i = 1}^{n}\| t_{v,i}^k-\t_v^k\|^2\\
	&\leq(1+\delta)\left(1-\eta  \sigma\right) \left\|\v^k-v^* (\x^k)\right\|^2+\frac{2(1+\delta)\eta \left(L_{F,1}+L_{f,2} r_v\right)^2}{ \sigma}	\left\| \y^k-y^* (\x^k) \right\|^2\\
	&\ \ + \frac{3(1+1/\delta)\eta^2}{n}\Big[ \left(L_{F,1}+L_{f,2} r_v\right)^2 \sum_{i=1}^n\left( \|x_i^k-\x^k\|^2+ \|y_i^k-\y^k\|^2\right)+ L_{f,1}^2\sum_{i=1}^n \|v_i^k-\v^k\|^2\Big]\\
	& \ \ + \frac{2\rho^2}{n}\sum_{i=1}^{n}\|v_i^k-\v^k\|^2 + \frac{2\eta^2\rho^2}{n}\sum_{i = 1}^{n}\| t_{v,i}^k-\t_v^k\|^2.
	\end{align*}
Finally, the   desired result \eqref{v^*} follows by setting $\delta= \eta\sigma/2$ and using $\eta\sigma\leq 1$.
\end{proof}

\begin{lemma}\label{*k+1-*k}
Let $L_v$ be defined in \eqref{def-constants-main-text}. There hold
\begin{align}\label{y*k+1-y*k}
\|y^*(\x^{k+1})-y^*(\x^{k})\| \leq \frac{L_{f,1}}{\sigma} \|\x^{k+1}-\x^{k}\|
\text{~~and~~}
\|v^*(\x^{k+1})-v^*(\x^{k})\| \leq \frac{L_v}{\sigma} \|\x^{k+1}-\x^{k}\|.
\end{align}
\end{lemma}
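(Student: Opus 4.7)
\medskip

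\noindent\emph{Proof plan.} Both inequalities follow from standard sensitivity arguments that use (i) the optimality condition $\nabla_2 f(x,y^*(x))=0$, (ii) Assumption~\ref{Assump1} (strong convexity of $f(x,\cdot)$ and Lipschitz smoothness of $\nabla f$ and $\nabla^2 f$), and (iii) the a priori bound $\|v^*(x)\|\le r_v$.

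For the first inequality, the plan is to exploit the optimality identity $\nabla_2 f(x,y^*(x))=0$ at both $\x^k$ and $\x^{k+1}$. Writing
\[
\nabla_2 f(\x^{k+1},y^*(\x^{k+1}))-\nabla_2 f(\x^{k+1},y^*(\x^{k}))
= \nabla_2 f(\x^{k},y^*(\x^{k}))-\nabla_2 f(\x^{k+1},y^*(\x^{k})),
\]
the left-hand side has norm at least $\sigma\,\|y^*(\x^{k+1})-y^*(\x^k)\|$ by $\sigma$-strong convexity of $f(\x^{k+1},\cdot)$ (equivalently, $\sigma$-strong monotonicity of $\nabla_2 f(\x^{k+1},\cdot)$), while the right-hand side has norm at most $L_{f,1}\|\x^{k+1}-\x^k\|$ by the $L_{f,1}$-Lipschitz continuity of $\nabla f$. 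Dividing by $\sigma$ yields the stated bound.

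For the second inequality, the plan is to set $H(x):=\nabla^2_{22}f(x,y^*(x))$ and $g(x):=\nabla_2 F(x,y^*(x))$, so that $v^*(x)=H(x)^{-1}g(x)$, and then use the algebraic identity
\[
v^*(\x^{k+1})-v^*(\x^k)
= H(\x^{k+1})^{-1}\Bigl[\bigl(g(\x^{k+1})-g(\x^k)\bigr)+\bigl(H(\x^k)-H(\x^{k+1})\bigr)v^*(\x^k)\Bigr].
\]
Each factor will then be controlled in turn: $\|H(\x^{k+1})^{-1}\|_{\mathrm{op}}\le 1/\sigma$ by strong convexity; $\|v^*(\x^k)\|\le r_v$ by the definition of $r_v$ in \eqref{def-constants-main-text}; and the Lipschitz bounds
\[
\|g(\x^{k+1})-g(\x^k)\|\le L_{F,1}(1+L_{f,1}/\sigma)\|\x^{k+1}-\x^k\|,\qquad
\|H(\x^{k+1})-H(\x^k)\|_{\mathrm{op}}\le L_{f,2}(1+L_{f,1}/\sigma)\|\x^{k+1}-\x^k\|
\]
are obtained by applying Assumption~\ref{Assump1}(b)--(c) to the curve $x\mapsto(x,y^*(x))$ and invoking the first inequality just proved for $\|y^*(\x^{k+1})-y^*(\x^k)\|$. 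Assembling the three bounds gives the factor $\tfrac{1}{\sigma}(1+L_{f,1}/\sigma)(L_{F,1}+L_{f,2}r_v)=L_v/\sigma$, matching the definition of $L_v$ in \eqref{def-constants-main-text}.

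The only mildly delicate point is the resolvent expansion for $v^*$: one must be careful that the $H^{-1}$ factor retained is the one evaluated at $\x^{k+1}$ (so its operator norm is still $\le 1/\sigma$) and that the constant $r_v$ bounds $\|v^*(\x^k)\|$ rather than $\|\v^k\|$. Once this bookkeeping is in place, the rest is simply the triangle inequality plus the two composite Lipschitz estimates above, and no iterate-dependent quantities enter.
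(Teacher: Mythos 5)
Your proof is correct and follows essentially the same route as the paper's: the first bound via strong monotonicity of $\nabla_2 f(\x^{k+1},\cdot)$ combined with the two optimality conditions, and the second via the linear-system identity for $v^*$ split into a $\nabla_2 F$-difference term and a Hessian-difference term acting on $v^*$, each controlled by the composite Lipschitz estimate through $y^*$ and the bound $\|v^*(\cdot)\|\le r_v$. The only cosmetic difference is that you invert the Hessian at $\x^{k+1}$ and bound its operator norm by $1/\sigma$, whereas the paper keeps $\nabla^2_{22}f(\x^k,y^*(\x^k))$ acting on $v^*(\x^k)-v^*(\x^{k+1})$ and lower-bounds that product by $\sigma$ times the norm of the difference --- an equivalent manipulation yielding the same constant $L_v/\sigma$.
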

\begin{proof}
Due to the optimality of $y^*(x)$, we have $\nabla_2 f(x,y^*(x))=0$ for any $x$.
Now, let $x$ and $x'$ be arbitrarily fixed.
Then, it follows from the $\sigma$-strong convexity of $f(x,\cdot)$ and $L_{f,1}$-Lipschitz continuity of $\nabla f$ that
\begin{align*}
\sigma\|y^*(x)-y^*(x')\|&\leq \|\nabla_2 f(x,y^*(x))-\nabla_2 f(x,y^*(x'))\|
\\&=  \|\nabla_2 f(x,y^*(x'))-\nabla_2 f(x',y^*(x'))\|\leq L_{f,1}\|x-x'\|
\end{align*}
Hence, we have
\begin{align}\label{y*(x)-y*(x')}
\|y^*(x)-y^*(x')\| \leq (L_{f,1}/\sigma)\|x-x'\|.
\end{align}
Then we can get first inequality immediately by taking $ x=\x^{k+1} $ and $ x'=\x^k $.
Next, to obtain the second inequality, we define
\begin{equation*}
\left\{
\begin{array}{l}
\Delta_6 := \nabla_2 F(\x^k,y^* (\x^k)) - \nabla_2 F(\x^{k+1},y^* (\x^{k+1})), \smallskip \\
\Delta_7 := \big[\nabla^2_{22} f(\x^{k+1},y^* (\x^{k+1}))-\nabla^2_{22} f(\x^k,y^* (\x^k))\big]v^* (\x^{k+1}).
\end{array}
\right.
\end{equation*}
By using Assumption \ref{Assump1}, we have
\begin{equation}\label{jy-05}
\left\{
\begin{array}{l}
\|\Delta_6\| \leq L_{F,1}\big(\|\x^{k+1}-\x^k\|+\|y^* (\x^{k+1})-y^* (\x^k)\|\big), \smallskip \\
\|\Delta_7\| \leq L_{f,2}r_v\big(\|\x^{k+1}-\x^k\|+\|y^* (\x^{k+1})-y^* (\x^k)\|\big).
\end{array}
\right.
\end{equation}
It follows from $\nabla^2_{22} f(\x^k,y^* (\x^k))v^* (\x^k)=\nabla_2 F(\x^k,y^* (\x^k))$ that
\begin{align}\label{jy-04}
\nabla^2_{22} f(\x^k,y^* (\x^k))(v^* (\x^k)-v^* (\x^{k+1})) = \Delta_6 + \Delta_7.
\end{align}
The $\sigma$-strong convexity of $f(\x^k,\cdot)$ implies that
\begin{align*}
\sigma\|v^* (\x^k)& - v^* (\x^{k+1})\|
 \leq \|\nabla^2_{22} f(\x^k,y^* (\x^k))(v^* (\x^k)-v^* (\x^{k+1}))\|  \\
& \stackrel{\eqref{jy-04}}= \|\Delta_6 + \Delta_7\|  \leq \|\Delta_6\| + \|\Delta_7\| 
  \stackrel{(\ref{jy-05}, \ref{y*(x)-y*(x')})}\leq  \left(L_{F,1}+L_{f,2}r_v\right)\left(1+ {L_{f,1}}/{\sigma} \right)\|\x^{k+1}-\x^k\|,
\end{align*}
which implies the second inequality in \eqref{y*k+1-y*k} by noting the definition of $L_v$ in \eqref{def-constants-main-text}.
\end{proof}

Now, we are ready to prove Theorem \ref{cr}.
\begin{proof}
By using Cauchy-Schwartz inequality again, we derive
\begin{align*}
\|\y^{k+1}-y^*(\x^{k+1})\|^2\leq \left(1+\frac{\beta\sigma}{4}\right)\|\y^{k+1}-y^*(\x^{k})\|^2+ \left(1+\frac{4}{\beta\sigma}\right)\|y^*(\x^{k+1})-y^*(\x^{k})\|^2.
\end{align*}
Taking into account \eqref{y*}, the first inequality in \eqref{y*k+1-y*k}, and $\beta\sigma\leq 1$, we obtain
\begin{align}\label{yf}
&\|\y^{k+1}-y^*(\x^{k+1})\|^2 
\leq \, \Big(1-\frac{\beta \sigma}{4}\Big)\|\y^k-y^*(\x^k)\|^2 \nonumber\\
&\quad +\frac{15\beta L_{f,1}^2}{4 n\sigma} \sum_{i=1}^n\left(\|\x^k-x^k_i\|^2+\|\y^k-y^k_i\|^2\right) +\frac{5L_{f,1}^2}{\beta\sigma^3}\|\x^{k+1}-\x^{k}\|^2.
\end{align}
Using Cauchy-Schwartz inequality again, we derive
\begin{align*}
\|\v^{k+1}-v^*(\x^{k+1})\|^2\leq \left(1+\frac{\eta\sigma}{4}\right)\|\v^{k+1}-v^*(\x^{k})\|^2+ \left(1+\frac{4}{\eta\sigma}\right)\|v^*(\x^{k+1})-v^*(\x^{k})\|^2.
\end{align*}
Similarly, taking into account \eqref{v^*}, the second inequality in \eqref{y*k+1-y*k}, and $\eta\sigma\leq 1$, we obtain
\begin{align}\label{vf}
\|\v^{k+1}- \, & v^*(\x^{k+1})\|^2 
\leq  \left(1-\frac{\eta  \sigma}{4}\right) \left\|\v^k-v^* (\x^k)\right\|^2+\frac{15\eta \left(L_{F,1}+L_{f,2} r_v\right)^2}{4\sigma}
\left\| \y^k-y^* (\x^k) \right\|^2\nonumber\\
& +\frac{5L_v^2}{\eta\sigma^3}\|\x^{k+1}-\x^{k}\|^2 +\frac{45\eta\left(L_{F,1}+L_{f,2} r_v\right)^2}{4n\sigma}\sum_{i=1}^n \left(\|x_i^k-\x^k\|^2+ \|y_i^k-\y^k\|^2\right)\nonumber\\
&  + \left(\frac{45\eta L_{f,1}^2}{4\sigma}+\frac{5\rho^2}{2}\right)\frac{1}{n}\sum_{i=1}^{n}\|v_i^k-\v^k\|^2 + \frac{5\rho^2\eta ^2}{2n} \sum_{i = 1}^{n}\| t_{v,i}^k-\t_v^k\|^2.
\end{align}
Define the Lyapunov function as in \eqref{Lyapunov}. By combining
 \eqref{tx}-\eqref{tv}, \eqref{x-xbar}-\eqref{v-vbar}, \eqref{xf}, \eqref{yf}, \eqref{vf}, it is straightforward to derive
\begin{align}\label{V}
V_{k+1} &- V_k \leq -\frac{\alpha}{2}\|\nabla \Phi (\bar{x}^k)\|^2
-A_1\|\d_x^k\|^2 - A_2 \|\y^k - y^*(\x^k)\|^2 -A_3  \|\v^k - v^*(\x^k)\|^2\\
&-  \frac{A_4}{n} \sum_{i=1}^n \|x_i^k - \x^k\|^2 - \frac{A_5 }{n} \sum_{i=1}^n \|y_i^k - \y^k\|^2 - \frac{A_6}{n} \sum_{i=1}^n \|v_i^k - \v^k\|^2\\
& -  \frac{A_7}{n} \sum_{i=1}^n \|t_{x,i}^k - \t_x^k\|^2 -  \frac{A_8}{n} \sum_{i=1}^n \|t_{y,i}^k - \t_y^k\|^2 -  \frac{A_9}{n} \sum_{i=1}^n \|t_{v,i}^k - \t_v^k\|^2,
\end{align}
where the coefficients are given by
\small
\begin{align*} 
A_1 &= \frac{\alpha}{2}-\frac{L_{\Phi}\alpha^2}{2} -\frac{5L_{f,1}^2 a_1 \alpha^2}{\sigma^3 \beta} -\frac{5L_v^2 a_2 \alpha^2}{\sigma^3 \eta} - \frac{4C ( a_6 \alpha^2 +  a_7\beta^2 + a_8\eta^2) \alpha^2}{1 - \rho},\\
A_2 &= \frac{\sigma a_1\beta}{4}-\frac{5 L_1^2 \alpha}{2}-\frac{15 L_1^2 a_2 \eta}{4\sigma} - \frac{C( a_6 \alpha^2 +  a_8\eta^2)(8 L_{f,1}^2 \beta^2 + 20L_1^2\eta^2)}{1 - \rho} - \frac{8 C L_{f,1}^2 a_7 \beta^4}{1-\rho},\\
A_3 &= \frac{\sigma a_2 \eta}{4}-\frac{5 L_{f,1}^2 \alpha}{2} - \frac{20C L_{f,1}^2 \eta^2( a_6\alpha^2 +  a_8\eta^2) }{1-\rho}, \\
A_4 &= a_3 (1-\rho) - \frac{5 L_1^2 \alpha}{2}-\frac{15 L_{f,1}^2 a_1 \beta }{4\sigma}-\frac{45 L_1^2 a_2 \eta}{4\sigma} - \frac{C( a_6\alpha^2 +  a_8\eta^2)(8 + 8 L_{f,1}^2 \beta^2 + 20 L_1^2 \eta^2)}{1 - \rho} - \frac{C a_7\beta^2 (8+ 8 L_{f,1}^2\beta^2)}{1-\rho},\\
A_5 &=a_4 (1-\rho) - \frac{5 L_1^2 \alpha}{2} - \frac{15 L_{f,1}^2 a_1 \beta }{4\sigma} - \frac{45 L_1^2 a_2 \eta}{4\sigma}  - \frac{C( a_6\alpha^2 +  a_8\eta^2)(8 + 8 L_{f,1}^2 \beta^2 + 20 L_1^2 \eta^2)}{1 - \rho} - \frac{C a_7\beta^2 (8+ 8 L_{f,1}^2\beta^2)}{1-\rho},\\
A_6 &= a_5 (1-\rho)- \frac{5 L_{f,1}^2 \alpha }{2}-\frac{45 L_1^2 a_2 \eta }{4\sigma} -\frac{5\rho^2 a_2}{2} - \frac{C( a_6\alpha^2 +  a_8\eta^2)(8 + 20 L_{f,1}^2 \eta^2)}{1 - \rho},\\
A_7 &=a_6\alpha^2(1-\rho) -\frac{\rho^2 a_3 \alpha^2}{1-\rho} - \frac{4C( a_6\alpha^2 +  a_8\eta^2+  a_7\beta^2)\alpha^2}{1-\rho},\\
A_8 &= a_7\beta^2(1-\rho) -\frac{\rho^2 a_4 \beta^2}{1-\rho} - \frac{4C( a_6\alpha^2 +  a_8\eta^2+  a_7\beta^2)\beta^2}{1-\rho},\\
A_9 &= a_8\eta^2(1-\rho) -\frac{\rho^2 a_5 \eta^2}{1-\rho}-\frac{5\rho^2a_2\eta^2}{2}- \frac{4C( a_6\alpha^2 +  a_8\eta^2)\eta^2}{1-\rho}.
\end{align*}
\normalsize
Here, constants such as $L_{\Phi}$, $L_v$ and $L_1$ are defined in \eqref{def-constants-main-text}, take
\begin{align*}
a_1=a_2=a_3=a_4=1,\, a_5= \frac{10\rho^2}{1-\rho},\,
a_6 =a_7 = \frac{2\rho^2}{(1-\rho)^2},\, a_8 = \max \left\{\frac{30\rho^4}{(1-\rho)^3},\frac{15\rho^2}{2(1-\rho)}\right\},
\end{align*}
and choose stepsizes satisfying
\small
\begin{align}\label{eq:stepsize}
&\beta < \min \left \{\frac{4 \sigma (1-\rho)}{75 L_{f,1}^2 }, \frac{1}{L_{f,1}},\frac{1-\rho}{\sqrt{96 Ca_7}}, \frac{\sigma(1-\rho)}{128C a_7},\frac{1-\rho}{\sqrt{24C}}\right \},\nonumber\\
&\eta < \min \left \{\frac{4 \sigma (1-\rho)}{225 L_1^2 }, \frac{2 \sigma \rho^2}{9 L_1^2 }, \frac{\sigma \beta}{60 L_1^2}, \frac{1}{L_{f,1}},\frac{1}{L_1}, \frac{\rho(1-\rho)}{\sqrt{360Ca_8}},\sqrt{\frac{\sigma(1-\rho)\beta}{896Ca_8}},\frac{\sigma(1-\rho)}{320Ca_8},\frac{1-\rho}{\sqrt{24Ca_8/a_6}},\frac{1-\rho}{\sqrt{24C}}\right\},\\
&\alpha < \min \left \{ \frac{1}{4L_{\Phi}}, \frac{\sigma^3 \beta}{40 L_{f,1}^2}, \frac{\sigma^3 \eta}{40 L_v^2}, \frac{\sigma \beta}{40 L_1^2}, \frac{\sigma \eta}{20 L_{f,1}^2}, \frac{2(1-\rho)}{25 L_1^2}, \frac{\rho^2}{L_{f,1}^2},1, \frac{\rho(1-\rho)}{\sqrt{360Ca_6}},\sqrt{\frac{\sigma(1-\rho)\beta}{896Ca_6}}, \sqrt{\frac{\sigma(1-\rho)\eta}{320Ca_6}},\frac{(1-\rho)^{3/2}}{\sqrt{24C}}\right\},\nonumber
\end{align}
\normalsize 
 it is elementary to show that $ A_1,A_2,\ldots,A_9 $ are all nonnegative. Then it follows that
\begin{align*}
V_{k+1} - V_k \leq -\frac{\alpha}{2}\|\nabla \Phi (\bar{x}^k)\|^2.
\end{align*}
By telescoping,
\begin{align*}
\min_{0 \le k \le K-1}  \|\nabla \Phi(\x^k)\|^2  \leq \frac{1}{K}\sum_{k=0}^{K-1}\|\nabla \Phi(\x^k)\|^2
\leq \frac{2V_0}{\alpha K} = O\left(\frac{1}{K}\right).
\end{align*}
The consensus error can also be established by
\begin{align*}
V_{k+1} - V_k \leq -  \frac{A_4}{n} \sum_{i=1}^n \|x_i^k - \x^k\|^2,
\end{align*}
which yields
\begin{align*}
\min_{0 \le k \le {K-1}}\frac{1}{n}\sum_{i=1}^{n} \|x_i^k - \x^k\|^2 \leq\frac{1}{nK}\sum_{k=0}^{K-1}\sum_{i=1}^{n} \|x_i^k - \x^k\|^2 \leq \frac{V_0}{A_4 K} = O\left(\frac{1}{K}\right).
\end{align*}
Similarly, we can derive
\begin{align*}
\min_{0 \le k \le {K-1}}\frac{1}{n} \sum_{i=1}^n \|y_i^k - \y^k\|^2= O\left(\frac{1}{K}\right) \text{~~~and~~~}\min_{0 \le k \le {K-1}}\frac{1}{n}\sum_{i=1}^n \|v_i^k - \v^k\|^2= O\left(\frac{1}{K}\right),
\end{align*}
which completes the proof of Theorem \ref{cr}.
\end{proof}

\end{document}